\newtheorem{theorem}{Theorem}%[section]
\newtheorem{definition}{Definition}[section]
\newtheorem{lemma}{Lemma}[section]
\numberwithin{equation}{section}
\newcommand{\red}[1]{\ifmmode\mathbf{\textcolor{red}{#1}}\else \textbf{\textcolor{red}{#1}}\fi}
\newcommand{\blue}[1]{\ifmmode\mathbf{\textcolor{blue}{#1}}\else \textbf{\textcolor{blue}{#1}}\fi}
\begin{document}

\title{On travelling wave solutions of a model of a liquid film flowing down a fibre}

\shorttitle{Travelling waves of a fibre coating model}

\author[H. Ji, R. Taranets, and M. Chugunova]{H\ls A\ls N\ls G\ls J\ls I\ls E\ls \ns  J\ls I\ls $^1$, R\ls O\ls M\ls A\ls N\ls \ns T\ls A\ls R\ls A\ls N\ls E\ls T\ls S $^2$ and M\ls A\ls R\ls I\ls N\ls A\ls \ns  C\ls H\ls U\ls G\ls U\ls N\ls O\ls V\ls A $^3$}

\affiliation{%
$^1\,$Department of Mathematics, University of California Los Angeles,
Los Angeles, CA 90095, USA\\
 email\textup{\nocorr: \texttt{hangjie@math.ucla.edu}}\\
$^2\,$Institute of Applied Mathematics and Mechanics of the NASU, 
Dobrovol'skogo Str., 84100, Sloviansk, Ukraine\\
email\textup{\nocorr: \texttt{taranets\_r@yahoo.com}}\\
$^3\,$Claremont Graduate University, 150 E. 10th Str., Claremont, California 91711, USA\\
email\textup{\nocorr: \texttt{marina.chugunova@cgu.edu}}\\
}

\maketitle

\begin{abstract}
Existence of non-negative weak solutions is shown for a full curvature thin-film model of
a liquid thin film flowing down a vertical fibre. The proof is based on the application of
a priori estimates derived for energy-entropy functionals. Long-time behaviour of these weak
solutions is analysed and, under some additional constraints
for the model parameters and initial values, convergence towards a travelling wave solution is obtained.
Numerical studies of energy minimizers and travelling waves are presented to illustrate analytical results.
\end{abstract}

% Uncomment for keywords
\vspace{2pc}
\begin{keywords}
Travelling waves; Thin film equation; Fourth-order parabolic partial differential equations; Existence of weak solutions
\end{keywords}

% Uncomment for Submitted to journal title message
%\submitto{\JPA}
%
% Uncomment if a separate title page is required
%\maketitle

\section{ Introduction }
Unstable thin viscous liquid films coating a vertical fibre display complex interfacial dynamics and various flow regimes. Driven by Rayleigh-Plateau instability and gravity effects, a rich variety of dynamics can occur including the formation of droplets, regular travelling wave patterns, and irregular droplet coalescence.
Such dynamics has attracted a lot of attention from researchers in recent years due to its widespread applications in heat and mass exchangers\cite{zeng2018thermohydraulic}, desalination \cite{sadeghpour2019water,zeng2019highly}, and particle capturing systems \cite{Sadeghpour2020Experimental}.

With proper choices of flow rate, liquid, fibre radius, and inlet geometry, three typical flow regimes are observed in experiments \cite{kalliadasis1994drop,Ji2020Modeling,sadeghpour2017effects}:  a convective instability regime where bead coalescence happens repeatedly, a travelling wave regime where a steady strain of beads flow down the fibre at a constant speed, and an isolated droplet regime where widely spaced large droplets are separated by small wave patterns. When other system parameters are fixed, varying the flow rate from high to low can lead to a flow regime transition from the convective to the travelling wave regime, and eventually to the isolated droplet regime.
A better understanding of the travelling wave pattern is expected to provide insights for many engineering applications that utilize steady trains of liquid beads.

At small flow rates, the dynamics of the axisymmetric flow on a cylinder is typically modelled by classical lubrication theory. Under the assumption that the film thickness is much smaller than the radius of the cylinder, Frenkel \cite{frenkel1992nonlinear} proposed a weakly nonlinear thin-film equation for the film thickness $h$ (or the height of the film) that captures both stabilizing and destabilizing effects of the surface tension in the dynamics. This evolution equation was further investigated by Kalliadasis \& Chang \cite{kalliadasis1994drop},
Chang \& Demekhin \cite{chang1999mechanism}, and Marzuola, Swygert \& Taranets  \cite{marzuola2019}. Similar models for weakly rippled thin films were also studied in \cite{shlang1982irregular, rosenau1989evolution}, and the paper of Rosenau and Oron \cite{rosenau1989evolution} incorporates fully nonlinear curvature terms that account for the deformation of the film interface based on asymptotic analysis.
To relax the thin film assumption, Craster \& Matar \cite{craster2006viscous} developed an asymptotic model which assumes that the film thickness is much smaller than the capillary length. 
In 2000, Kliakhandler et al.~\cite{KDB} extended the thin film model to consider a thick layer of viscous fluid by introducing fully nonlinear curvature terms, which leads to an evolution equation \eqref{r-00} for the film thickness $h(x,t)$,
\begin{equation}\label{r-00}
h_{t} + \frac{1}{h+r_0}\left[ Q(h) \left( 1  + \sigma^{-1}  \left[ \frac{h_{xx}}{(1+ h_x^2)^{3/2}} - \frac{1}{(h+r_0)(1+ h_x^2)^{1/2}} \right]_x \right)\right]_x  = 0,
\end{equation}
where $\sigma > 0$ is the Bond number, $r_0 > 0$ is the dimensionless fibre radius,
and the mobility $Q(h)$ takes the form
\begin{equation}
    Q(h) = \tfrac{1}{16} \Bigl[ 4 (h + r_0)^4 \log \bigl( \tfrac{h+r_0}{r_0}\bigr) - h (3 h^3 + 12r_0 h^2 + 14r_0^2 h + 4r_0^3)\Bigr].
\end{equation}
While equation \eqref{r-00} is a model equation that was not rigorously derived asymptotically, it constitutes all the necessary terms to describe the corresponding physical process.
The term $[Q(h)]_x$ in \eqref{r-00} represents gravitational effects,  ${h_{xx}}/{(1+ h_x^2)^{3/2}}$ and ${(h+r_0)^{-1}(1+ h_x^2)^{-1/2}}$ describe the stabilizing and destabilizing roles of the surface tension due to axial and azimuthal curvatures of the interface, respectively. 

In 2019, Ji et al.~\cite{Bert2019} studied a family of full lubrication models that incorporate slip boundary conditions, fully nonlinear curvature terms, and a film stabilization mechanism.
The film stabilization term,
\begin{equation}
    \Pi(h) = -\frac{A}{h^3}, \quad A>0,
\label{pi}
\end{equation}
is motivated by the form of disjoining pressure widely used in lubrication equations \cite{thiele2011depinning,RevModPhys.57.827} to describe the wetting behaviour of a liquid on a solid substrate, and the scaling parameter $A > 0$ is typically selected based on a stable liquid layer in the coating film dynamics.
Numerical investigations against experimental results in \cite{Bert2019} show that compared with previous studies, the combined physical effects better describe the propagation speed and the stability transition of the moving droplets.
For higher flow rates, coupled evolution equations of both the film thickness and local flow rate are developed \cite{ruyer2008modelling, ruyer2009film, trifonov1992steady}. These equations incorporate inertia effects and streamwise viscous diffusion based on the integral boundary-layer approach. Recently, Ji et al. \cite{Ji2020Modeling} further extend a weighted-residual integral boundary-layer model to incorporate the film stabilization mechanism to address the effects of the inlet nozzle geometry on the downstream flow dynamics.

While these previous studies primarily focus on the modelling of coating film dynamics, theoretical analysis of these models are still lacking. In this paper, we focus on the model \eqref{r-00} with an additional generalised film stabilization term motivated by \cite{Bert2019,Ji2020Modeling}.
Substituting
$
u = h + r_0
$
into \eqref{r-00} and including the generalised film stabilization term $\tilde{\Pi}(u) = -A/u^m$, 
we obtain a fourth-order nonlinear partial differential equation for $u(t, x)$, namely,
\begin{equation}\label{r-000}
u \, u_{t} +  \left[ \sigma^{-1} Q(u) \left( \frac{u_{xx}}{(1+ u_x^2)^{3/2}} - \frac{1}{u (1+ u_x^2)^{1/2}} + \frac{A}{u^{ m}} \right)_x  + Q(u) \right]_x  = 0 ,
\end{equation}
where the scaling parameter $A \geqslant 0$, the exponent $m > 0$,
and the mobility becomes
$$
Q(u) := \tfrac{1}{4} u^4 \log \bigl( \tfrac{u}{r_0}\bigr) - \tfrac{3}{16}(u^2 - r_0^2)(u^2 -   \tfrac{r_0^2}{3}  ).
$$
A useful observation is that
$$
   Q(u) \sim C\,u^4 \log \bigl( \tfrac{u}{r_0}\bigr)  \text{ as } u \to + \infty \text{ and } u \to r_0.
$$
In this paper, we will show the existence of non-negative weak solutions to \eqref{r-000} with a focus on the travelling wave solutions. Under proper conditions, $u(x,t)$ converges to a travelling wave solution in long time.

The structure of the rest of the paper is as follows. In section \ref{sec:existence}, we present the main theorem on the existence of non-negative weak solutions to the problem, followed by the proof of the existence theorem in section \ref{sec:proof}. Section \ref{sec:tws} focuses on the travelling wave solutions of the problem. Numerical studies based on the analytical results are included in section \ref{sec:numerics}, followed by concluding remarks in section \ref{sec:conclusion}.

\section{Existence of non-negative weak solutions}
\label{sec:existence}
In this section, we will define a generalised non-negative weak solution and formulate an existence of the solution statement for the following problem:
\begin{equation}\label{r-1}
|u| \, u_{t} +  \left( \sigma^{-1} |Q(u)| \left[  ( \Phi'(u_x))_x - \frac{f(u_x)}{u} + \frac{A}{u^{ m}} \right]_x  + |Q(u)| \right)_x  = 0 \text{ in } Q_{T},
\end{equation}
\begin{equation}\label{r-2}
|\Omega| -  \text{periodic boundary conditions},
\end{equation}
\begin{equation}\label{r-3}
u(x,0) = u_0(x),
\end{equation}
where $ \Omega  \subset \mathbb{R}^1$ is bounded domain, $m > 0$, $A \geqslant 0$, $Q_T = \Omega \times (0,T)$, $ T  > 0$, $Q(r_0) = 0$, $f(z) := (1 + z^2)^{-\frac{1}{2}}$, and
$$
\Phi(z) = \frac{1}{f(z)}, \ \Phi'(z) = z\,f(z), \ \Phi''(z) = f^{3}(z) \ \forall\, z \in \mathbb{R}^1.
$$
Assume that
\begin{equation}\label{r-4-000}
 0 \leqslant u_0(x) \in L^2( \Omega ) \cap W_1^1( \Omega ) : \int \limits_{\Omega} {u_0 \Phi(u_{0,x})\,dx} +
A\int \limits_{\Omega} {u^{2-m}_0\,dx} < \infty .
\end{equation}
Integrating (\ref{r-1}) in $\Omega\times (0,t)$, by (\ref{r-2}) we have
\begin{equation}\label{mass-new}
 \int  \limits_{ \Omega } { u^2(x,t) \,  dx }  = \int  \limits_{ \Omega } { u_0^2(x) \,  dx } =: M  \quad \forall \, t \geqslant 0.
\end{equation}
Let us denote by
$$
J:=   ( \Phi'(u_x))_x -  u^{-1} f(u_x) + A\,u^{-m} .
$$
Here, $-J$ represents a dynamic pressure \cite{ji2018instability,Bert2019,ji2020steady} that incorporates both axial and azimuthal surface tension effects and the film stabilization term. 

\begin{definition}\label{def}
A generalised weak solution of the problem (\ref{r-1})--(\ref{r-3}) is a non-negative function
$u(x,t)$ satisfying
\begin{align}
& \label{2:weak1}
u \in C (\overline{Q}_T) \cap L^\infty (0,T; W_1^1(\Omega ))\cap L^\infty (0,T; L^2(\Omega )) ,\\
& \label{2:weak-d} (u^2)_t \in L^2(0,T; (H^1(\Omega))^*),\\
& \label{2:weak2}
|Q(u)|^{\frac{1}{2}} J_x  \in L^2(\mathcal{P}_T),\\
&\label{2:weak2-00}
\chi_{\{|u_x|< \infty \} }  \Phi''(u_{  x})u_{ xx} \in L^2(Q_T), \ u^{-m} \in L^2(Q_T),
\end{align}
where $\mathcal{P}_T = \overline{Q}_T \setminus ( \{u= r_0\} \cup
\{t=0\})$ and $u$ satisfies (\ref{r-1}) in the following weak sense:
\begin{multline} \label{2:integral_form-nn}
\tfrac{1}{2} \int\limits_0^T \langle (u^2(\cdot,t))_t, \phi \rangle_{(H^1)^*,H^1} \; dt -
\sigma^{-1} \iint\limits_{\mathcal{P}_T}
{Q(u) J_{x} \phi_x\,dx dt }  -   
\iint\limits_{Q_{T}} {Q(u) \phi_x  \,dx dt} = 0,
\end{multline}
\begin{equation} \label{int-id-2-00-2}
\iint \limits_{Q_T}{ J  \psi \,dx dt} =
\iint \limits_{Q_T}{ ( \chi_{\{|u_x|< \infty \} }  \Phi''(u_{  x})u_{ xx}  -  \tfrac{ f(u_{x})}{u} + \tfrac{A}{u^m } )  \psi \,dx dt}
\end{equation}
for all $ \phi \in L^2(0,T; H^1(\Omega))$ and $\psi \in L^2(Q_T)$;
\begin{align}
&  \label{2:ID1} u(\cdot,t) \to u(\cdot,0) = u_0
\mbox{  strongly in $L^2(\Omega)$ as $t \to 0$}, \\
& \label{2:BC1}
(\ref{r-2})  \mbox{ holds at all points of
the lateral boundary, where $\{ u  \neq r_0 \}$.}
\end{align}
\end{definition}

Let us denote by $\tilde{G}_0^{(\alpha)}(z)$  the following function
\begin{equation}\label{entropy}
\tilde{G}_0^{(\alpha)}(z):= \int \limits_{s_0}^z { |s| \Bigl( \int \limits_{s_0}^s { \tfrac{|v|^{\alpha}}{|Q(v)|} dv} \Bigr) \,ds} \geqslant 0,
\end{equation}
where $s_0$ is a positive constant. The function  $\tilde{G}_0^{(\alpha)}(z)$ is a generalization of the entropy introduced in \cite{beretta1995nonnegative}.

\begin{theorem}\label{Th-ex}
Assume that the initial function $u_0$ satisfies (\ref{r-4-000}).
Then problem (\ref{r-1})---(\ref{r-3}) has a weak solution $u (x,t)$ defined
in $Q_{T}$ for any $T > 0$, in the sense of Definition~\ref{def}.
Assume that the initial function $u_0 \geqslant r_0 > 0$ also satisfies
$$
\int \limits_{\Omega} {\tilde{G}_0^{(1)}(u_0) \,dx} < \infty .
$$
Then the solution  $u (x,t) $ satisfies $ u \geqslant r_0 >0 $ and
$$
u_t  \in L^2(0,T; (H^1(\Omega))^*),  \ \
\iint \limits_{Q_T}{u \, \Phi''(u_{x})u^2_{xx}\,dx dt} < \infty.
$$
\end{theorem}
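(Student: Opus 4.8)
The plan is to construct the solution by the energy--entropy method of Bernis--Friedman and Beretta--Bertsch--Dal Passo, adapted to the facts that the mobility $|Q(u)|$ degenerates at $u=r_0$ (not at $u=0$) and that the pressure $J$ carries the singular azimuthal term $u^{-1}f(u_x)$ and the stabilization term $Au^{-m}$. First I would introduce a regularized problem: replace $|Q(u)|$ by a strictly positive mobility $Q_\varepsilon$ bounded below, regularize the singular terms (e.g. by $(|u|+\varepsilon)^{-1}$ and $(|u|+\varepsilon)^{-m}$), add a fourth-order term of size $\delta$ to make the problem uniformly parabolic, and smooth the data to $u_{0\varepsilon}\geq c_\varepsilon>r_0$. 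Standard parabolic theory (Galerkin, or Leray--Schauder combined with Schauder estimates) then yields smooth, strictly positive solutions $u_\varepsilon$ on $[0,T]$, on which the manipulations below are legitimate. Throughout, the natural variable is $u^2$: the equation reads $|u|\,u_t+F_x=0$ with $F=\sigma^{-1}|Q(u)|J_x+|Q(u)|$, so $\tfrac12(u^2)_t=-F_x$ when $u>0$.

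The backbone is the energy estimate. Testing the regularized equation with the pressure $J$ and integrating by parts, one checks (using $\Phi(z)-z\Phi'(z)=f(z)$) that
$$\int_\Omega u\,u_t\,J\,dx=-\frac{d}{dt}\,\mathcal E(u),\qquad \mathcal E(u):=\int_\Omega u\,\Phi(u_x)\,dx-\frac{A}{2-m}\int_\Omega u^{2-m}\,dx,$$
while the flux structure gives $\int u\,u_t\,J\,dx=\sigma^{-1}\int|Q(u)|J_x^2\,dx+\int|Q(u)|J_x\,dx$. Absorbing the gravity term by Young's inequality yields
$$\frac{d}{dt}\mathcal E(u)+\tfrac12\sigma^{-1}\int_\Omega|Q(u)|J_x^2\,dx\leq \tfrac12\sigma\int_\Omega|Q(u)|\,dx.$$
Since $\Phi(u_x)\geq|u_x|$, the bound on $\int u\,\Phi(u_x)$ controls $\int u|u_x|=\tfrac12\,\mathrm{TV}(u^2)$, so $u^2\in BV(\Omega)\hookrightarrow L^\infty(\Omega)$ uniformly; mass conservation \eqref{mass-new} then bounds $\int|Q(u)|$ and closes the Gronwall argument. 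This step delivers the uniform bounds behind \eqref{2:weak1} and \eqref{2:weak2} (namely $|Q(u)|^{1/2}J_x\in L^2(\mathcal P_T)$, $u\in L^\infty(0,T;L^2\cap W_1^1)$, and $u\in L^\infty(Q_T)$), and via $\tfrac12(u^2)_t=-F_x$ it also gives \eqref{2:weak-d}.

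For the positivity and higher regularity in the second part I would run the entropy estimate with $\alpha=1$. Testing the equation with $(\tilde G_0^{(1)})'(u)/|u|=\int_{s_0}^{u}\tfrac{|v|}{|Q(v)|}\,dv$ removes the degenerate mobility and gives $\frac{d}{dt}\int_\Omega\tilde G_0^{(1)}(u)\,dx=\sigma^{-1}\int_\Omega u\,u_x\,J_x\,dx$, the gravity contribution vanishing by periodicity. Expanding $J_x$ and integrating by parts, the axial term produces exactly the good dissipation $-\int u\,\Phi''(u_x)u_{xx}^2\,dx$, the stabilization term produces $-Am\int u^{-m}u_x^2\,dx\leq0$, and the azimuthal term reduces (after discarding a perfect-derivative piece) to the lower-order quantity $\int u^{-1}u_x^2 f(u_x)\,dx$, controlled by $\int|u_x|$ once $u\geq r_0$. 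The key structural input is that $1/|Q(v)|$ is non-integrable at $v=r_0$ (since $Q(u)\sim C\,u^4\log(u/r_0)$ vanishes linearly there); hence a uniform bound on $\int\tilde G_0^{(1)}(u_\varepsilon)$ forbids $u$ from crossing below $r_0$ in the limit, giving $u\geq r_0$. With the lower bound in force the same inequality yields $\iint_{Q_T} u\,\Phi''(u_x)u_{xx}^2\,dx\,dt<\infty$, and $|u|\geq r_0$ upgrades \eqref{2:weak-d} to $u_t=\tfrac1{2u}(u^2)_t\in L^2(0,T;(H^1)^*)$.

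Finally I would pass to the limit $\varepsilon,\delta\to0$. The uniform spatial $BV$/$L^\infty$ bound and the $(H^1)^*$ bound on $(u_\varepsilon^2)_t$ give, through the Aubin--Lions--Simon lemma and the one-dimensional embedding, strong convergence $u_\varepsilon\to u$ in $C(\overline Q_T)$, which secures \eqref{2:ID1} and identifies the nonlinear terms away from $\{u=r_0\}$; the quadratic dissipation terms pass by weak lower semicontinuity. The main obstacle, which I expect to absorb most of the work, is exactly this passage to the limit in the degenerate and singular terms: the pressure $J$ is only defined on $\mathcal P_T=\overline Q_T\setminus(\{u=r_0\}\cup\{t=0\})$, so I must show that $Q(u)J_x$ and the curvature term in \eqref{2:weak2-00} converge correctly up to this degenerate set, and that the azimuthal and stabilization terms remain integrable in the limit (controlling the cross-term $\int u^{-1}u_x^2 f(u_x)$ and establishing $u^{-m}\in L^2(Q_T)$). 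Establishing the required separation from $r_0$ via the entropy, and handling the full nonlinear curvature $\Phi$ inside these limits, is the crux of the argument.
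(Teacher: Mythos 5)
Your blueprint coincides with the paper's own route rather than departing from it: a two-parameter regularization (mobility lifted by $\varepsilon$, uniform parabolicity restored by a $\delta$-term, data lifted and smoothed), an energy estimate obtained by testing with the pressure — identical in structure to \eqref{as-1}, including the control of $\sup u$ through mass conservation and $\int u|u_x|$ as in \eqref{as-1-00} — an entropy estimate with the test function $\int_{s_0}^{u}|v|/|Q(v)|\,dv$, positivity from the non-integrability of $1/|Q|$ at $r_0$ (the paper's \eqref{ub-1}), and an Arzel\`a--Ascoli limit. So this is the same approach, and most individual computations (the identity $\Phi(z)-z\Phi'(z)=f(z)$, the cancellation of the gravity term in the entropy balance, the reduction of the azimuthal term to $\int u^{-1}u_x^2 f(u_x)\,dx$) match the paper.

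The genuine gap is in closing the entropy estimate, precisely at the point you call the crux but leave unresolved: you control the azimuthal contribution $\int u^{-1}u_x^2 f(u_x)\,dx$ ``once $u\geqslant r_0$'', yet $u\geqslant r_0$ is the \emph{conclusion} of the entropy argument, not a hypothesis available at the regularized level — there is no maximum principle for this fourth-order problem, so lifting the initial data above $r_0$ does not keep $u_{\varepsilon}$ there, and the crude bound $\varepsilon^{-1}\int|u_x|$ is not uniform in $\varepsilon$. The paper breaks this circularity with two ingredients absent from your proposal: (i) the regularized pressure $J_{\varepsilon\delta}$ carries an extra singular term $\delta\, g'_\varepsilon(u)/g_\varepsilon^{\beta}(u)$ with $\beta>6$, so that the energy bound \eqref{as-2} controls $\delta\int g_\varepsilon^{2-\beta}(u)\,dx$; and (ii) the interpolation inequality \eqref{min-n} of Lemma~\ref{L1} converts that integral bound into the pointwise bound $\sup_{\Omega} g_\varepsilon^{-2}(u)\leqslant \tilde C_{1,\varepsilon\delta}(t)$ of \eqref{sup-nn-2}, uniform in $\varepsilon$ at fixed $\delta$, which is what actually dominates the bad terms $\sigma^{-1}C_\alpha\int g_\varepsilon^{\alpha-2}\Phi(u_x)\,dx$ in \eqref{as-2-0-a2} and yields \eqref{as-2-0-a2-00}--\eqref{as-3-000}. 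This also dictates the order of limits, which you leave unspecified and which matters because $\tilde C_{1,\varepsilon\delta}$ blows up as $\delta\to0$: first $\varepsilon\to0$ at fixed $\delta$, giving $u_\delta\geqslant r_0$ by entropy blow-up; only then $\delta\to0$, with \eqref{sup-nn-2} replaced by $\sup u_\delta^{-2}\leqslant r_0^{-2}$. A further, smaller defect: your $\delta$-regularization (``add a fourth-order term of size $\delta$''), if taken as a bare biharmonic term outside the flux, destroys the dissipative structure used in both estimates; the paper instead inserts $\delta(g_\varepsilon(u)u_x)_x$ \emph{inside} $J_{\varepsilon\delta}$, so that testing with $J_{\varepsilon\delta}$ still produces a signed dissipation and the $\delta$-terms appear as $\tfrac{\delta}{2}\int g_\varepsilon^2u_x^2$ in the energy and $\delta\int(\tilde g_\varepsilon(u))_{xx}^2$ in the entropy balance. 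Without the $\delta$-penalization and the min-value lemma, your entropy inequality cannot be made uniform in $\varepsilon$, and with it fall the lower bound, the bounds \eqref{2:weak2-00}, and the identification of $\chi_{\{|u_x|<\infty\}}\Phi''(u_x)u_{xx}$ in the limit (the paper's \eqref{con-4}).
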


\section{Proof of Theorem~\ref{Th-ex}}
\label{sec:proof}

\subsection{Regularised problem}

Note that  (\ref{r-1}) is a parabolic equation which degenerates when $u = 0$, $ u = r_0$ and $|u_x| \to \infty$.
Therefore we have to apply a regularization technique to this equation to overcome the degeneracies.
For given $\varepsilon > 0$ and $\delta > 0$ we have
\begin{multline} \label{rr-1}
g_\varepsilon(u) u_t + \sigma^{-1}(Q_{\varepsilon} (u)[ (\Phi'(u_x))_x +  \delta (g_\varepsilon(u)u_x)_x - \tfrac{g'_\varepsilon(u)}{g_\varepsilon(u)} f(u_x) +\\
A \tfrac{g'_\varepsilon(u)}{g^m_\varepsilon(u)} + \delta  \tfrac{ g'_\varepsilon(u) }{ g_\varepsilon^{\beta}(u) }  ]_x )_x +
(Q_\varepsilon(u))_x =  0   \text{ in } Q_T,
\end{multline}
\begin{equation}\label{rr-2}
|\Omega| -  \text{periodic boundary conditions},
\end{equation}
\begin{equation}\label{rr-3}
u(x,0) = u_{0,\varepsilon \delta}(x),
\end{equation}
where
$$
g_\varepsilon(z):= (z^2 + \varepsilon^2)^{\frac{1}{2}},\  Q_{\varepsilon} (u)  := |Q(u)| + \varepsilon,
$$
$$
u_{0,\varepsilon \delta}(x) \geqslant u_0(x) 
+ \varepsilon^{\theta} +  \delta^\kappa, \ \ u_{0,\varepsilon \delta}(x) \in C^{4+\gamma}(\bar{\Omega}),
$$
$$
u_{0,\varepsilon \delta }(x) \to u_{0,\delta}(x) \text{ strongly in } H^1(\Omega) \text{ as } \varepsilon \to 0,
$$
$$
u_{0,\delta }(x) \to u_{0 }(x) \text{ strongly in } L^2(\Omega) \cap W_1^1(\Omega) \text{ as } \delta \to 0,
$$
where $\gamma \in (0,1)$, $\beta >6$, $\theta \in (0,\frac{1}{3})$, and $\kappa \in (0,\frac{1}{\beta -2})$.

The positivity condition $\varepsilon > 0$ eliminates the degeneracies at $u =0,\, r_0$ in (\ref{r-1}). This
condition  will ``lift'' the initial data and smooth the initial data up to $C^{4+\gamma}(\Omega) $.
The $\delta > 0$ condition  eliminates the degeneracy at $|u_x| \to \infty$ in (\ref{r-1}).
As a result, the regularised equation (\ref{rr-1}) is uniformly parabolic. Moreover, the boundary conditions (\ref{rr-2})
are of Lopatinskii-Shapiro type that implies the existence of a proper continuation of solutions to the whole line (cf. \cite{Sol}, for example).
Using the well-known parabolic Schauder estimates from \cite{Sol}, one can generalise \cite[Theorem 6.3, p. 302]{Ed} and
show that  the regularised problem has a unique classical solution
$u_{\delta \varepsilon}  \in
C_{x,t}^{4+\gamma,1+\gamma/4}( \Omega \times [0, \tau_{\delta \varepsilon}])$ for some time $\tau_{\delta \varepsilon} > 0$.
Here $\tau_{\delta \varepsilon}$ is the local existence time from \cite[Theorem 6.3, p. 302]{Ed}.

\subsection{A priori estimates}
Now, we need to derive a priori estimates for energy-entropy functionals.
Let us denote
$$
J_{\varepsilon \delta}: = (\Phi'(u_x))_x + \delta (g_\varepsilon(u)u_x)_x - \tfrac{g'_\varepsilon(u)}{g_\varepsilon(u)} f(u_x)
+ A \tfrac{g'_\varepsilon(u)}{g^m_\varepsilon(u)} + \delta  \tfrac{ g'_\varepsilon(u) }{ g_\varepsilon^{\beta}(u) } .
$$
Multiplying (\ref{rr-1}) by $- J_{\varepsilon \delta} $ and integrating  over
$\Omega$, we obtain
$$
- \int \limits_{\Omega}{g_\varepsilon(u) u_t J_{\varepsilon \delta} \,dx} + \sigma^{-1} \int \limits_{\Omega}{
 Q_\varepsilon(u)J^2_{\varepsilon \delta, x} \,dx} =   \int \limits_{\Omega}{ ( Q_\varepsilon(u))_{x} J_{\varepsilon \delta} \,dx}.
$$
As
\begin{multline*}
- \int \limits_{\Omega}{g_\varepsilon(u) u_t J_{\varepsilon \delta} \,dx}  =
 \int \limits_{\Omega}{ \Big[ g'_\varepsilon(u) u_x \Phi'(u_x) u_t +
g_\varepsilon(u) \Phi'(u_x) u_{xt} +  g'_\varepsilon(u) f(u_x)u_t \Big] \,dx} + \\
\delta \int \limits_{\Omega}{ \Big[ g'_\varepsilon(u)g_\varepsilon(u) u_x^2 u_t +
  g^2_\varepsilon(u)  u_x u_{xt} \Big] \,dx} +
  \tfrac{A}{m -2} \tfrac{d}{dt} \int \limits_{\Omega}{g^{2-m}_\varepsilon(u) \,dx} +
  \tfrac{\delta}{\beta -2} \tfrac{d}{dt} \int \limits_{\Omega}{g^{2-\beta}_\varepsilon(u) \,dx}  =    \\
\int \limits_{\Omega}{ \Big[  \Phi(u_x) \tfrac{d}{dt} g_\varepsilon(u) + g_\varepsilon(u) \tfrac{d}{dt} \Phi(u_x)\Big]
\,dx} + \\
\tfrac{\delta}{2} \int \limits_{\Omega}{ \Big[ u_x^2  \tfrac{d}{dt} g^2_\varepsilon(u) +
  g^2_\varepsilon(u) \tfrac{d}{dt} u_x^2 \Big] \,dx} + \tfrac{A}{m -2} \tfrac{d}{dt} \int \limits_{\Omega}{g^{2-m}_\varepsilon(u) \,dx}
   +
    \tfrac{\delta}{\beta -2} \tfrac{d}{dt} \int \limits_{\Omega}{g^{2-\beta}_\varepsilon(u) \,dx}   = \\
\tfrac{d}{dt} \int \limits_{\Omega}{ g_\varepsilon(u) \Phi(u_x)\,dx} +
\tfrac{\delta}{2}\tfrac{d}{dt} \int \limits_{\Omega}{ g^2_\varepsilon(u) u_x^2  \,dx} + \tfrac{A}{m -2} \tfrac{d}{dt} \int \limits_{\Omega}{g^{2-m}_\varepsilon(u) \,dx}  +
  \tfrac{\delta}{\beta -2} \tfrac{d}{dt} \int \limits_{\Omega}{g^{2-\beta}_\varepsilon(u) \,dx} ,
\end{multline*}
\begin{multline*}
\int \limits_{\Omega}{ [ Q_\varepsilon(u)]_{x} J_{\varepsilon \delta} \,dx} = - \int \limits_{\Omega}{ Q_\varepsilon (u) J_{\varepsilon \delta,x}  \,dx} \leqslant
\tfrac{\sigma^{-1}}{2} \int \limits_{\Omega}{ Q_\varepsilon(u) J^2_{\varepsilon \delta, x}  \,dx} +
\tfrac{\sigma }{2} \int \limits_{\Omega}{ Q_\varepsilon(u) \,dx}
\end{multline*}
then we get
\begin{equation}\label{as-1}
\tfrac{d}{dt} \mathcal{E}_{\varepsilon\delta}(u)  +
\tfrac{\sigma^{-1}}{2} \int \limits_{\Omega}{
 Q_\varepsilon(u)J^2_{\varepsilon \delta,x} \,dx} \leqslant
\tfrac{\sigma }{2} \int \limits_{\Omega}{ Q_\varepsilon(u) \,dx},
\end{equation}
where
$$
\mathcal{E}_{\varepsilon\delta}(u) :=  \int \limits_{\Omega}{ \Big[ g_\varepsilon(u) \Phi(u_x) + \tfrac{\delta}{2}g^2_\varepsilon(u) u_x^2 + \tfrac{A}{m -2}g^{2-m}_\varepsilon(u) +  \tfrac{\delta}{\beta -2}g^{2-\beta}_\varepsilon(u)} \Big] \,dx.
$$
Integrating (\ref{rr-1}) and taking into account periodic boundary conditions, we get
\begin{equation}\label{mass-new-00}
 \int \limits_{\Omega}{   \tilde{g}_\varepsilon(u)\,dx} =
  \int \limits_{\Omega}{   \tilde{g}_\varepsilon(u_{0,\varepsilon\delta})\,dx} =: M_{\varepsilon\delta},
\text{ where } \tilde{g}'_\varepsilon(z) = g_\varepsilon(z).
\end{equation}
By (\ref{mass-new-00}) we deduce that
$$
\Big| \tilde{g}_\varepsilon(u) - \tfrac{M_{\varepsilon\delta}}{|\Omega|} \Big| =   \Bigl | \int \limits_{x_0}^{x}{ g_\varepsilon(u) u_x \,dx}\Bigr | \leqslant
  \int \limits_{\Omega}{ g_\varepsilon(u) \Phi(u_x)\,dx},
$$
whence
\begin{equation}\label{as-1-00}
\tfrac{1}{2} u^2 \leqslant \tfrac{M_{\varepsilon\delta}}{|\Omega|} +  \int \limits_{\Omega}{ g_\varepsilon(u) \Phi(u_x)\,dx} \Rightarrow
|u| \leqslant \sqrt{2}\Bigl( \tfrac{M_{\varepsilon\delta}}{|\Omega|}  + \int \limits_{\Omega}{ g_\varepsilon(u) \Phi(u_x)\,dx} \Bigr)^{\frac{1}{2}}.
\end{equation}
Taking into account
$$
| Q  (z) | \leqslant C_0  ( 1 + z^4 \log \bigl( \tfrac{z}{r_0}\bigr) ) ,
$$
(\ref{mass-new-00}) and (\ref{as-1-00}), from (\ref{as-1}) we find that
\begin{multline}\label{as-1-01}
\tfrac{d}{dt} \mathcal{E}_{\varepsilon\delta}(u)  +
\tfrac{\sigma^{-1}}{2} \int \limits_{\Omega}{
 Q_\varepsilon(u)J^2_{\varepsilon \delta,x} \,dx} 
 \leqslant \\
\tfrac{\sigma C_0}{2}  \int \limits_{\Omega}{ (1 + u^4 \log \bigl( \tfrac{u}{r_0}\bigr))  \,dx}
\leqslant \tfrac{C_0 \sigma }{2}\bigl[ |\Omega| +   M_{\varepsilon\delta} \sup (u^2 \log \bigl( \tfrac{u}{r_0}\bigr)) \bigr ]
\leqslant \\
\tfrac{C_0 \sigma }{2} |\Omega| +
\tfrac{C_0 \sigma }{2} M_{\varepsilon\delta} \Bigl(\tfrac{M_{\varepsilon\delta}}{|\Omega|} + \int \limits_{\Omega}{ g_\varepsilon(u) \Phi(u_x) \,dx}  \Bigr)  \log \Bigl( \tfrac{2}{r_0^2} \bigl( \tfrac{M_{\varepsilon\delta}}{|\Omega|} + \int \limits_{\Omega}{ g_\varepsilon(u) \Phi(u_x) \,dx} \bigr) \Bigr) 
\leqslant \\
C_{1,\varepsilon\delta} \mathcal{E}_{\varepsilon\delta}(u)  \log (\mathcal{E}_{\varepsilon\delta}(u) ).
\end{multline}
By (\ref{as-1-01}) we obtain
\begin{equation} \label{as-2}
\mathcal{E}_{\varepsilon\delta}(u) +
\tfrac{\sigma^{-1}}{2} \iint \limits_{Q_t}{
 Q_\varepsilon(u)J^2_{\varepsilon \delta,x} \,dx dt}  \leqslant K_{\varepsilon\delta}(t) :=
  \mathcal{E}_{\varepsilon\delta}(u_{0,\varepsilon\delta}) e^{ e^{C_{1,\varepsilon\delta}\,t}}
\end{equation}
for all $t \geqslant 0$.
Let us denote by
$$
G_{\varepsilon}^{(\alpha)}(z)  :
( G^{(\alpha)}_{\varepsilon }(z))'' = \tfrac{g^{\alpha}_\varepsilon(z) }{Q_{\varepsilon } (z)} \ \forall\, z \in \mathbb{R}^1.
$$
Multiplying (\ref{rr-1}) by $ (G^{(\alpha)}_{\varepsilon }(u))'$ and integrating  over $\Omega$, we obtain
\begin{multline*}
\tfrac{d}{dt} \int \limits_{\Omega}{\tilde{G}^{(\alpha)}_{\varepsilon} (u)   \,dx}
- \\
\sigma^{-1} \int \limits_{\Omega}{ g^{\alpha}_\varepsilon(u) u_x \Bigl[ (\Phi'(u_x))_x +  \delta (g_\varepsilon(u)u_x)_x -
\tfrac{g'_\varepsilon(u)}{g_\varepsilon(u)} f(u_x) + A \tfrac{g'_\varepsilon(u)}{g^m_\varepsilon(u)} + \delta  \tfrac{ g'_\varepsilon(u) }{ g_\varepsilon^{\beta}(u) } }\Bigr]_x  \,dx
- \\
\int \limits_{\Omega}{ g^{\alpha}_\varepsilon(u) u_x \,dx} =0 ,
\end{multline*}
where
$
(\tilde{G}^{(\alpha)}_{\varepsilon }(z) )' =  g_\varepsilon(z) \, (G^{(\alpha)}_{\varepsilon } (z))' .
$
By periodic boundary conditions we have
\begin{multline*}
 \tfrac{d}{dt} \int \limits_{\Omega}{\tilde{G}^{(\alpha)}_{\varepsilon  } (u)   \,dx} +
 \sigma^{-1} \int \limits_{\Omega}{g^{\alpha}_\varepsilon(u)\Phi''(u_{x})u^2_{xx}\,dx}  + \\
 \delta  \sigma^{-1}  \int \limits_{\Omega}{g^{\alpha -1}_\varepsilon(u) ( \tilde{g}_\varepsilon(u))^2_{xx}\,dx} =
 - \sigma^{-1} \alpha \int \limits_{\Omega}{g^{\alpha-1}_\varepsilon(u) g'_\varepsilon(u) u_x^2 \Phi''(u_{x})u_{xx}\,dx} + \\
 \sigma^{-1} \int \limits_{\Omega}{g^{\alpha-1}_\varepsilon(u) g'_\varepsilon(u) f(u_x) u_{xx}    \,dx} +
 \sigma^{-1} \alpha \int \limits_{\Omega}{ g^{\alpha-2}_\varepsilon(u) g'^2_\varepsilon(u) u_x^2 f(u_x) \,dx} - \\
\delta  \sigma^{-1} (\alpha -1)  \int \limits_{\Omega}{ g^{\alpha-2}_\varepsilon(u) g'_\varepsilon(u) u_x
( \tilde{g}_\varepsilon(u))_{x} ( \tilde{g}_\varepsilon(u))_{xx} \,dx}
+ \\
 A \sigma^{-1}\int \limits_{\Omega}{g^{\alpha}_\varepsilon(u)\bigl( \tfrac{g'_\varepsilon(u)}{g^m_\varepsilon(u)} \bigr)' u^2_{x}\,dx}
+   \delta \sigma^{-1}\int \limits_{\Omega}{g^{\alpha}_\varepsilon(u)\bigl( \tfrac{g'_\varepsilon(u)}{g^\beta_\varepsilon(u)} \bigr)' u^2_{x}\,dx}    \leqslant  \\
\tfrac{\sigma^{-1}}{2} \int \limits_{\Omega}{g^{\alpha}_\varepsilon(u)\Phi''(u_{x})u^2_{xx}\,dx} +
\sigma^{-1} C_{\alpha} \int \limits_{\Omega}{g^{\alpha-2}_\varepsilon(u) g'^2_\varepsilon(u) \Phi(u_{x})\,dx} +
\end{multline*}

\begin{multline*}
 \tfrac{ \delta \sigma^{-1} (\alpha -1) }{3}  \int \limits_{\Omega}{  \tfrac{ (g_\varepsilon^{\alpha-3}(u) g'_\varepsilon(u) )'  }{g_\varepsilon(u) } (\tilde g_\varepsilon(u) )_x^4 \,dx} + \\
 A \sigma^{-1}\int \limits_{\Omega}{g^{\alpha - m - 3}_\varepsilon(u) [ g^2_\varepsilon(u) - (m+1)u^2 ]  u^2_{x}\,dx}
+  \delta \sigma^{-1}\int \limits_{\Omega}{g^{\alpha - \beta - 3}_\varepsilon(u) [ g^2_\varepsilon(u) - (\beta+1)u^2 ]  u^2_{x}\,dx}
 \leqslant \\
 \tfrac{\sigma^{-1}}{2} \int \limits_{\Omega}{g^{\alpha}_\varepsilon(u)\Phi''(u_{x})u^2_{xx}\,dx} +
  \sigma^{-1} C_{\alpha} \int \limits_{\Omega}{g^{\alpha-2}_\varepsilon(u)  \Phi(u_{x})\,dx} +\\
 \tfrac{ \delta \sigma^{-1}  |\alpha -1| (1+|\alpha-4| ) }{3} \int \limits_{\Omega}{   g_\varepsilon^{\alpha-5}(u)   (\tilde g_\varepsilon(u) )_x^4 \,dx}  +\\
 A \sigma^{-1}\int \limits_{\Omega}{g^{\alpha - m - 3}_\varepsilon(u) [ - m \,g^2_\varepsilon(u) + (m+1)\varepsilon^2 ]  u^2_{x}\,dx}
 +   \delta \sigma^{-1}\int \limits_{\Omega}{g^{\alpha - \beta - 3}_\varepsilon(u) [ - \beta \,g^2_\varepsilon(u) + (\beta+1)\varepsilon^2 ]  u^2_{x}\,dx} ,
\end{multline*}
where $C_{\alpha} =  \alpha^2 + | \alpha | +1 $, whence
\begin{multline} \label{as-2-0-al}
\tfrac{d}{dt} \int \limits_{\Omega}{\tilde{G}^{(\alpha)}_{\varepsilon  } (u)   \,dx} +
m\, A \sigma^{-1}\int \limits_{\Omega}{g^{\alpha - m - 1}_\varepsilon(u) u^2_{x}\,dx}  +
 \delta \, \beta  \sigma^{-1}\int \limits_{\Omega}{g^{\alpha - \beta - 1}_\varepsilon(u) u^2_{x}\,dx} +
\\
 \tfrac{\sigma^{-1}}{2} \int \limits_{\Omega}{g^{\alpha}_\varepsilon(u)\Phi''(u_{x})u^2_{xx}\,dx} +
  \delta  \sigma^{-1}   \int \limits_{\Omega}{g^{\alpha -1}_\varepsilon(u) ( \tilde{g}_\varepsilon(u))^2_{xx}\,dx} \leqslant \\
\sigma^{-1} C_{\alpha} \int \limits_{\Omega}{g^{\alpha-2}_\varepsilon(u)  \Phi(u_{x})\,dx} +
\tfrac{ \delta \sigma^{-1}  |\alpha -1| (1+|\alpha-4| ) }{3} \int \limits_{\Omega}{   g_\varepsilon^{\alpha-5}(u)   (\tilde g_\varepsilon(u) )_x^4 \,dx} +\\
\varepsilon^2 (m+1) A \sigma^{-1}\int \limits_{\Omega}{g^{\alpha - m - 3}_\varepsilon(u)  u^2_{x}\,dx}
+  \varepsilon^2 \delta (\beta+1) \sigma^{-1}\int \limits_{\Omega}{g^{\alpha - \beta - 3}_\varepsilon(u)  u^2_{x}\,dx}.
\end{multline}
From (\ref{as-2-0-al}) we have
\begin{multline} 
\tfrac{d}{dt} \int \limits_{\Omega}{\tilde{G}^{(\alpha)}_{\varepsilon  } (u)   \,dx} +
 \tfrac{\sigma^{-1}}{2} \int \limits_{\Omega}{g^{\alpha}_\varepsilon(u)\Phi''(u_{x})u^2_{xx}\,dx} +\\
 m\, A \sigma^{-1}\int \limits_{\Omega}{g^{\alpha - m - 3}_\varepsilon(u) ( \tilde{g}_\varepsilon(u))^2_{x } \,dx} +
 \delta \, \beta  \sigma^{-1}\int \limits_{\Omega}{g^{\alpha - \beta - 3}_\varepsilon(u) ( \tilde{g}_\varepsilon(u))^2_{x } \,dx}  +
\\
 \delta  \sigma^{-1}  \int \limits_{\Omega}{g^{\alpha -1}_\varepsilon(u) ( \tilde{g}_\varepsilon(u))^2_{xx}\,dx} \leqslant
\sigma^{-1} C_{\alpha} \mathop {\sup} \limits_{\Omega} ( g^{-2}_\varepsilon(u) ) \int \limits_{\Omega}{g^{\alpha}_\varepsilon(u)  \Phi(u_{x})\,dx} + \\
\tfrac{ \delta \sigma^{-1}  |\alpha -1| (1+|\alpha-4| ) }{3} \int \limits_{\Omega}{   g_\varepsilon^{\alpha-5}(u)   (\tilde g_\varepsilon(u) )_x^4 \,dx} +\\
\varepsilon^2 (m+1) A \sigma^{-1}  \mathop {\sup} \limits_{\Omega} ( g^{-2}_\varepsilon(u) ) \int \limits_{\Omega}{g^{\alpha - m - 3}_\varepsilon(u) ( \tilde{g}_\varepsilon(u))^2_{x }\,dx} +
\nonumber
\end{multline}
\begin{equation}
\varepsilon^2 \delta (\beta+1)   \sigma^{-1}  \mathop {\sup} \limits_{\Omega} ( g^{-2}_\varepsilon(u) ) \int \limits_{\Omega}{g^{\alpha - \beta - 3}_\varepsilon(u) ( \tilde{g}_\varepsilon(u))^2_{x }\,dx} .
\label{as-2-0-a2}
\end{equation}

Next, we will use the following lemma:
\begin{lemma}\cite[Lemma 3.1, p.806]{CD}
\label{L1} 
Let $v \in H^1(\Omega)$ be a nonnegative function. Then for any
$p > 2$, there exists a constant $\tilde{C} $ depending on $p$ and $\Omega$ such that
\begin{equation}\label{min-n}
\sup v^{-1} \leqslant \tilde{C} \Bigl[ \Bigl( \int \limits_{\Omega}{ v^{-p} \,dx}  \Bigr)^{\frac{1}{p}} +
\Bigl( \int \limits_{\Omega}{ v^{-p} \,dx}  \Bigr)^{\frac{1}{p-2}}  \Bigl( \int \limits_{\Omega}{ v_x^2 \,dx}  \Bigr)^{\frac{1}{p}} \Bigr] .
\end{equation}
\end{lemma}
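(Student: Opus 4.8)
The plan is to bound $\sup v^{-1}=1/\min v$ directly, by a fundamental-theorem estimate issued from the minimizing point, combined with H\"older's inequality and an optimization over the length of the interval on which that estimate is exploited; the two-term structure of (\ref{min-n}) will emerge from two regimes in this optimization.

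First I would record that in one space dimension $H^1(\Omega)\hookrightarrow C(\overline{\Omega})$, so a nonnegative $v\in H^1(\Omega)$ is continuous and attains its infimum $m_0:=\min_{\overline{\Omega}}v$ at some $x_0\in\overline{\Omega}$, and $\sup v^{-1}=1/m_0$. If $m_0=0$, then near $x_0$ one has $v(x)=\int_{x_0}^{x}v_s\,ds\leqslant|x-x_0|^{1/2}(\int_\Omega v_x^2\,dx)^{1/2}$, hence $v^{-p}\gtrsim|x-x_0|^{-p/2}$, which is non-integrable for $p>2$; thus $\int_\Omega v^{-p}\,dx=+\infty$ and (\ref{min-n}) holds trivially. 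So I may assume $m_0>0$. Writing $D:=\int_\Omega v_x^2\,dx$ and $I:=\int_\Omega v^{-p}\,dx$, Cauchy--Schwarz gives the pointwise bound $v(x)\leqslant m_0+|x-x_0|^{1/2}D^{1/2}$ for every $x\in\Omega$.

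Because $\Omega$ is a bounded interval and $x_0\in\overline{\Omega}$, for each $r\in(0,|\Omega|/2]$ there is a one-sided subinterval $E_r\subset\Omega$ of length $r$ with $x_0$ as an endpoint, and on $E_r$ we have $v\leqslant m_0+r^{1/2}D^{1/2}$. Therefore
\[
I=\int_\Omega v^{-p}\,dx\ \geqslant\ \int_{E_r}v^{-p}\,dx\ \geqslant\ r\,(m_0+r^{1/2}D^{1/2})^{-p},
\]
equivalently $(r/I)^{1/p}\leqslant m_0+r^{1/2}D^{1/2}$ for all admissible $r$. This single implicit inequality already encodes both terms of (\ref{min-n}): when the gradient contribution is subdominant, i.e. $m_0^2/D>|\Omega|/2$, I take $r=|\Omega|/2$, so that $r^{1/2}D^{1/2}<m_0$ and hence $(r/I)^{1/p}<2m_0$, giving $1/m_0\leqslant \tilde{C}\,I^{1/p}$, the first term of (\ref{min-n}). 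Otherwise I balance the two contributions through the critical choice $r=m_0^2/D\leqslant|\Omega|/2$, for which $m_0+r^{1/2}D^{1/2}=2m_0$; substituting and solving the resulting power inequality for $1/m_0$ yields the second, mixed term of (\ref{min-n}). Adding the two regime bounds gives (\ref{min-n}), with $\tilde{C}$ depending only on $p$ and $\Omega$.

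I expect the main difficulty to be technical rather than conceptual. The minimizer $x_0$ may sit at or near $\partial\Omega$, which is exactly why $E_r$ must be taken one-sided and why the admissible lengths are capped at $|\Omega|/2$; keeping the $p$- and $\Omega$-dependence of the constants honest across the two regimes, and checking that in the balanced regime the critical length $m_0^2/D$ indeed stays below $|\Omega|/2$ so that $E_r$ fits inside $\Omega$, are the points requiring care. The degenerate case $m_0=0$ and the interpolation exponents in the balancing step are the only delicate ingredients; everything else is elementary calculus together with H\"older's and Cauchy--Schwarz inequalities.
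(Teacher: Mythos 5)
Your setup (continuity via the 1D embedding, the case $m_0:=\min v=0$, the pointwise bound $v(x)\leqslant m_0+|x-x_0|^{1/2}D^{1/2}$ with $D=\int_\Omega v_x^2\,dx$, the one-sided interval $E_r$ capped at $r\leqslant|\Omega|/2$, and the first regime giving the term $I^{1/p}$ with $I=\int_\Omega v^{-p}\,dx$) is all correct. The genuine gap is in the balanced regime, precisely at the words ``solving the resulting power inequality for $1/m_0$ yields the second, mixed term.'' Carry out that algebra: substituting $r=m_0^2/D$ into $(r/I)^{1/p}\leqslant 2m_0$ gives $m_0^{2/p-1}\leqslant 2\,(DI)^{1/p}$, i.e.
\begin{equation*}
\sup v^{-1}=m_0^{-1}\ \leqslant\ 2^{\frac{p}{p-2}}\Bigl(\int_\Omega v^{-p}\,dx\Bigr)^{\frac{1}{p-2}}\Bigl(\int_\Omega v_x^2\,dx\Bigr)^{\frac{1}{p-2}},
\end{equation*}
so the gradient factor carries the exponent $\tfrac{1}{p-2}$, \emph{not} the $\tfrac1p$ appearing in (\ref{min-n}). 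Since $\tfrac{1}{p-2}-\tfrac1p=\tfrac{2}{p(p-2)}>0$, your bound does not imply the stated second term whenever $\int v_x^2\geqslant 1$. Nor can a cleverer choice of $r$ repair this: maximizing $\phi(r)=(r/I)^{1/p}-r^{1/2}D^{1/2}$ over $r>0$ gives $r^*=c_p\,(I^{1/p}D^{1/2})^{-2p/(p-2)}$ and again only $m_0\geqslant c\,(ID)^{-1/(p-2)}$, the same scaling.

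The obstruction is structural, and it is worth seeing why: every estimate your method produces is invariant under $v\mapsto\lambda v$ (both sides homogeneous of degree $-1$), whereas the second term of (\ref{min-n}) is homogeneous of degree $-1-\tfrac{4}{p(p-2)}$. In fact (\ref{min-n}) as printed is \emph{false} for $2<p<4$: take $\Omega=(0,1)$, $v=1+Kx$, $p=3$; then $\sup v^{-1}=1$, $\int_\Omega v^{-3}\,dx\leqslant\tfrac{1}{2K}$, $\int_\Omega v_x^2\,dx=K^2$, and the right-hand side is $O(K^{-1/3})\to 0$ as $K\to\infty$, so no constant $\tilde C(p,\Omega)$ can work. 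The paper offers no proof to compare against --- the lemma is quoted from [CD] --- and the exponent on the gradient factor has evidently been mis-transcribed: the scale-invariant version $\sup v^{-1}\leqslant\tilde C\bigl[\,I^{1/p}+(ID)^{1/(p-2)}\bigr]$ is the correct statement, and your argument, with the regime-2 algebra done honestly, is a complete and essentially sharp proof of it (the family $v=m(1+x)$ shows $(ID)^{1/(p-2)}$ is attained, and this weaker form still suffices for the paper's application in (\ref{sup-nn})--(\ref{sup-nn-2}), with adjusted exponents). So: right method, right treatment of the degeneracies, but as a proof of the printed (\ref{min-n}) the final step fails --- and must fail, since the printed inequality does not hold verbatim.
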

Using (\ref{min-n}) with $v = g^{2}_\varepsilon(u)$ and $p = \frac{\beta-2}{2} > 2$, i.\,e. $\beta > 6$, we have
\begin{equation}\label{sup-nn}
\mathop {\sup} \limits_{\Omega} ( g^{-2}_\varepsilon(u) ) \leqslant \tilde{C}
 \Bigl[ \Bigl( \int \limits_{\Omega}{ g^{2-\beta}_\varepsilon(u) \,dx}  \Bigr)^{\frac{2}{\beta-2}} +
\Bigl( \int \limits_{\Omega}{  g^{2-\beta}_\varepsilon(u) \,dx}  \Bigr)^{\frac{2}{\beta-6}}  \Bigl( \int \limits_{\Omega}{g^{2}_\varepsilon(u)  u_x^2 \,dx}  \Bigr)^{\frac{2}{\beta-2}} \Bigr],
\end{equation}
whence, due to (\ref{as-2}), we find that
\begin{equation}\label{sup-nn-2}
\mathop {\sup} \limits_{\Omega} ( g^{-2}_\varepsilon(u) ) \leqslant \tilde{C}_{1,\varepsilon\delta}(t) :=
\tilde{C} \bigl[ \bigl(\tfrac{\beta-2}{\delta} K_{\varepsilon\delta}(t) \bigr)^{\frac{2}{\beta-2}} + 2^{\frac{2}{\beta-2}}
(\beta -2)^{\frac{2}{\beta-6}}
\bigl(\tfrac{1}{\delta} K_{\varepsilon\delta}(t) \bigr)^{\frac{4(\beta -4)}{(\beta -2)(\beta-6)}}   \bigr].
\end{equation}
Choosing $\alpha = 1$ and $\varepsilon $ small enough  in (\ref{as-2-0-a2}), taking into
account (\ref{as-2}) and (\ref{sup-nn-2}),  we deduce that
\begin{multline} \label{as-2-0-a2-00}
\tfrac{d}{dt} \int \limits_{\Omega}{\tilde{G}^{(1)}_{\varepsilon  } (u)   \,dx} +
 \tfrac{\sigma^{-1}}{2} \int \limits_{\Omega}{g_\varepsilon(u)\Phi''(u_{x})u^2_{xx}\,dx}  +
 \delta  \sigma^{-1}  \int \limits_{\Omega}{ ( \tilde{g}_\varepsilon(u))^2_{xx}\,dx} \leqslant\\
3 \sigma^{-1} \tilde{C}_{1,\varepsilon\delta}(t) \, \int \limits_{\Omega}{g_\varepsilon(u)  \Phi(u_{x})\,dx}.
\end{multline}
Integrating (\ref{as-2-0-a2-00}) in time, taking into account (\ref{as-2}), we arrive at
\begin{multline} \label{as-3-000}
 \int \limits_{\Omega}{\tilde{G}^{(1)}_{\varepsilon  } (u)   \,dx} +  \tfrac{\sigma^{-1}}{2} \iint \limits_{Q_t}{g_\varepsilon(u)\Phi''(u_{x})u^2_{xx}\,dx dt} +   \\
 \delta  \sigma^{-1} \iint \limits_{Q_t}{ ( \tilde{g}_\varepsilon(u))^2_{xx}\,dx} \leqslant C_{2,\varepsilon\delta}(t)
\end{multline}
for all $t \geqslant 0$, where
$$
C_{2,\varepsilon\delta}(t) :=
 \int \limits_{\Omega}{\tilde{G}^{(1)}_{\varepsilon } (u_{0,\varepsilon \delta})   \,dx} +
3 \sigma^{-1}   \int \limits_0^t{ \tilde{C}_{1,\varepsilon\delta}(s) K_{\varepsilon\delta}(s)\,ds}.
$$
Let $\tilde{\mathcal{G}}_{\varepsilon  } (z) := \tilde{G}^{(1)}_{\varepsilon  } (z)$ and $\mathcal{G}_{\varepsilon  } (z) := G^{(1)}_{\varepsilon  } (z)$.
Note that
\begin{multline*}
|\mathcal{G}''_{\varepsilon}(z) - \mathcal{G}''_{0}(z)| =
 \Bigl | \tfrac{ g_\varepsilon(z)}{Q_{\varepsilon } (z)} - \tfrac{ |z| }{|Q(z)|}
\Bigr | =
\Bigl | \tfrac{ ( g_\varepsilon(z)- |z|) |Q(z)| - \varepsilon |z| }{|Q(z)| Q_{\varepsilon } (z)} \Bigr |
\leqslant    \\
\tfrac{  g_\varepsilon(z) - |z| }{ Q_\varepsilon(z) } + \tfrac{B\, \varepsilon}{|Q(z)| Q_{\varepsilon } (z)}
\leqslant \tfrac{\varepsilon}{ Q_\varepsilon(z) } + \tfrac{B\, \varepsilon}{|Q(z)| Q_{\varepsilon } (z)} \leqslant
C \tfrac{\varepsilon^{\frac{1}{2}}}{|Q(z)|^{\frac{3}{2}}}
\end{multline*}
provided $|z| \leqslant B $, where $C = \frac{1}{2} (B +C_0(1+B^4 \log(\frac{B}{r_0})))$. So,
\begin{multline*}
|\tilde{\mathcal{G}_{\varepsilon}}(z) - \tilde{\mathcal{G}}_{0}(z)| =  \Bigl | \int \limits_B^z { g_\varepsilon(s) ( \mathcal{G}'_{\varepsilon}(s) - \mathcal{G}'_{0}(s) )\,ds} +
\int \limits_B^z { \mathcal{G}'_{0}(s) (g_\varepsilon(s) -|s|)\,ds} \Bigr |
\leqslant  \\
 C \, \varepsilon^{\frac{1}{2}} \Bigl |  \int \limits_B^z { \int \limits_B^v { \tfrac{ds dv}{|Q(s)|^{\frac{3}{2}}} }} \Bigr | +
 \varepsilon \,  \mathcal{G}_0(z) ,
\end{multline*}
whence, due to $|Q(z)| \sim | \log ( \frac{z}{r_0} )| $ as $z \to r_0$, we find that
$$
|\tilde{\mathcal{G}}_{\varepsilon}(u_{0,\varepsilon \delta}) - \tilde{\mathcal{G}}_{0}(u_{0,\varepsilon \delta})| \leqslant C\, \varepsilon^{\frac{1}{2}}(
\log( 1 + \varepsilon^{\theta} ) )^{-\frac{3}{2}}
\to 0 \text{ as } \varepsilon \to 0
$$
provided $\theta \in (0,\frac{1}{3})$. As a result, we deduce that
\begin{equation}\label{rr-00}
\int \limits_{\Omega} {  \tilde{\mathcal{G}}_{\varepsilon}(u_{0,\varepsilon \delta})\,dx} \to
\int \limits_{\Omega} {  \tilde{\mathcal{G}}_{0}(u_{0,\delta})\,dx} \text{ as } \varepsilon \to 0.
\end{equation}
Therefore, due to (\ref{rr-00}) we have
$$
C_{2,\varepsilon\delta}(t) \mathop \to \limits_{\varepsilon   \to 0 }
C_{2,\delta}(t) .
$$
Note that, after taking limit $\varepsilon \to 0$, we obtain the limit solution
$u_\delta \geqslant r_0 >0$ (the proof is similar to \cite[Theorem~4.1]{B8}), and for this reason, instead of (\ref{sup-nn-2}), we
use $\mathop {\sup} \limits_{\Omega} ( u_{\delta}^{-2}  ) \leqslant r_0^{-2}$ for
the limit process on $\delta \to 0$.

\subsection{Construction of a weak solution}

We will construct a weak non-negative solution using Arzela-Ascoli theorem. By (\ref{as-2}) 
we deduce a uniform boundedness of the following
sequences
\begin{equation}\label{bb-1}
\{  u_{\varepsilon \delta }  \}_{\varepsilon >0, \delta > 0}  \text{ in }
L^{\infty}(0,T; W_1^1(\Omega)),
\end{equation}
\begin{equation}\label{bb-2}
\{ g_\varepsilon(u_{\varepsilon \delta } ) \Phi(u_{\varepsilon \delta,x }) \}_{\varepsilon >0, \delta > 0}  \text{ in } L^{\infty}(0,T; L^1(\Omega)),
\end{equation}
\begin{equation}\label{bb-3}
\{ \delta^{\frac{1}{2}} \tilde{g}_\varepsilon(u_{\varepsilon \delta })  \}_{ \varepsilon >0, \delta > 0}  \text{ in } L^{\infty}(0,T; H^1 (\Omega)),
\end{equation}
\begin{equation}\label{bb-4}
\{ Q^{\frac{1}{2}}_\varepsilon(u_{\varepsilon \delta })J_{\varepsilon \delta,x} \}_{\varepsilon >0, \delta > 0}  \text{ in } L^{2}(Q_T),
\end{equation}
\begin{equation}\label{bb-6}
\{ (\tilde{g}_\varepsilon(u_{\varepsilon \delta }))_t \}_{\varepsilon >0,  \delta > 0}  \text{ in } L^{2}(0,T; (H^1(\Omega))^*).
\end{equation}
\begin{equation}\label{bb-7}
\{  g^{2-m}_\varepsilon(u_{\varepsilon \delta } ) \}_{\varepsilon >0,  \delta > 0}  \text{ in } L^{\infty}(0,T; L^1(\Omega)).
\end{equation}
\begin{equation}\label{bb-7-n}
\{ \delta g^{2-\beta}_\varepsilon(u_{\varepsilon \delta } ) \}_{\varepsilon >0,  \delta > 0}  \text{ in } L^{\infty}(0,T; L^1(\Omega)).
\end{equation}
By (\ref{as-3-000}) and (\ref{bb-3}) we arrive at
\begin{equation}\label{bb-8}
\int \limits_{\Omega}{\tilde{G}^{(1)}_{\varepsilon} (u_{\varepsilon \delta })   \,dx}  \leqslant C ,
\end{equation}
\begin{equation}\label{bb-9}
\{ g^{\frac{1}{2}}_\varepsilon(u_{\varepsilon \delta })  \Phi''^{\frac{1}{2}}(u_{\varepsilon \delta,x })  u_{\varepsilon \delta,xx }  \}_{ \varepsilon >0, \delta > 0}  \text{ in } L^{2}(Q_T),
\end{equation}
\begin{equation}\label{bb-10}
\{  \Phi''^{\frac{1}{2}}(u_{\varepsilon \delta,x })  u_{\varepsilon \delta,xx }    \}_{ \delta > 0}  \text{ in } L^{2}(Q_T),
\end{equation}
\begin{equation}\label{bb-11}
\{ \delta^{\frac{1}{2}} \tilde{g}_\varepsilon(u_{\varepsilon \delta })  \}_{\varepsilon >0, \delta > 0}  \text{ in } L^{2}(0,T; H^2 (\Omega)).
\end{equation}
By (\ref{bb-3}) and (\ref{bb-6}) we have (see, e.\,g. \cite{B8})
\begin{equation}\label{bb-12}
\{ \tilde{g}_\varepsilon(u_{\varepsilon \delta }) \}_{\varepsilon >0}  \text{ is u.b. in }  C^{\frac{1}{2}, \frac{1}{8}}_{x,t}(\bar{Q}_T).
\end{equation}
From (\ref{bb-1}) and (\ref{bb-12}) we obtain that
\begin{equation}\label{con-00}
\tilde{g}_\varepsilon(u_{\varepsilon \delta }) \mathop {\to} \limits_{ \varepsilon \to 0}
\tilde{g}_0(u_{\delta }) := \tfrac{1}{2} u^2_{\delta}  \text{ uniformly  in }  Q_T.
\end{equation}
By (\ref{con-00}) and (\ref{bb-11}) we find that
\begin{equation}\label{con-01}
\tilde{g}_\varepsilon(u_{\varepsilon \delta }) \mathop {\to} \limits_{ \varepsilon \to 0}
\tilde{g}_0(u_{\delta })   \text{ strongly in  }   L^{2}(0,T; H^1(\Omega)),
\end{equation}
\begin{equation}\label{con-02}
\tilde{g}_\varepsilon(u_{\varepsilon \delta }) \mathop {\to} \limits_{ \varepsilon \to 0}
\tilde{g}_0(u_{\delta })   \text{ weakly in  }   L^{2}(0,T; H^2(\Omega)).
\end{equation}
By (\ref{con-00}) and (\ref{bb-8}) we deduce that
\begin{equation}\label{ub-1}
  u_\delta (x,t) \geqslant r_0 > 0 \text{ in }  Q_T.
\end{equation}
By (\ref{con-00}) and (\ref{bb-6}) we get
\begin{equation}\label{con-3-0}
 (\tilde{g}_\varepsilon(u_{\varepsilon \delta }))_t  \mathop {\to} \limits_{\varepsilon \to 0}  (\tilde{g}_0(u_{\delta }))_t \text{ weakly in }  L^{2}(0,T; (H^1(\Omega))^*).
\end{equation}
By (\ref{bb-4}) and (\ref{con-00}) we have
\begin{equation}\label{econ-2-0}
   Q_\varepsilon(u_{\varepsilon\delta} ) J_{\varepsilon\delta,x} \mathop {\to} \limits_{\varepsilon \to 0} \chi_{\{ u_\delta > r_0\}}   Q (u_\delta)  J_{\delta,x} \text{ strongly in }  L^{2}(Q_T).
\end{equation}
By regularity theory of uniformly parabolic equations and by the
uniformly H\"{o}lder continuity of the $u_{\varepsilon\delta}$, we deduce that
\begin{multline}\label{hol}
u_{\varepsilon\delta,t}, \  u_{\varepsilon\delta,x}, \  u_{\varepsilon\delta,xx}, \ u_{\varepsilon\delta,xxx},
u_{\varepsilon\delta,xxxx} \text{ are uniformly convergent } \\
\text{ in any compact subset of } \{ u_\delta > r_0\}.
\end{multline}
It follows that
\begin{equation}\label{fl}
J_{\delta} =   \Phi''(u_{\delta, x})u_{\delta , xx}  +  \delta (\tilde{g}_0(u_{\delta}))_{xx} -
  u_{\delta}^{-1}  f(u_{ \delta, x}) + A\, u_{\delta}^{-m} +  \delta u_{\delta}^{-\beta}
\text{ on } \{ u_\delta > r_0\}.
\end{equation}
Next, multiplying (\ref{rr-1}) by $\phi$ and integrating in $Q_T$, we have
\begin{multline}\label{int-id}
\iint \limits_{Q_T}{ (\tilde{g}_\varepsilon(u_{\varepsilon \delta }))_t \phi \,dx dt}  -  \sigma^{-1}  \iint \limits_{Q_T}{
 Q_\varepsilon(u_{\varepsilon \delta} )J_{\varepsilon \delta,x} \phi_x \,dx dt}  -
\iint \limits_{Q_T}{  Q_\varepsilon(u_{\varepsilon \delta})  \phi_x \,dx dt}  = 0
\end{multline}
\begin{multline}\label{int-id-2}
\iint \limits_{Q_T}{ J_{\varepsilon \delta } \psi \,dx dt} =  \\
\iint \limits_{Q_T}{ \left[ \Phi''(u_{\varepsilon \delta, x})u_{\varepsilon \delta , xx}  +  \delta (\tilde{g}_\varepsilon(u_{\varepsilon \delta}))_{xx} - \tfrac{g'_\varepsilon(u_{\varepsilon \delta})}{g_\varepsilon(u_{\varepsilon \delta})} f(u_{\varepsilon \delta, x}) + A \tfrac{g'_\varepsilon(u)}{g^m_\varepsilon(u)} + \delta \tfrac{g'_\varepsilon(u)}{g^\beta_\varepsilon(u)}   \right]  \psi \,dx dt}
\end{multline}
for all $ \phi \in L^2(0,T; H^1(\Omega))$ and $\psi \in L^2(Q_T)$. Due to (\ref{con-00})--(\ref{hol}),
letting $\varepsilon \to 0$ in (\ref{int-id}) and (\ref{int-id-2}), we arrive at
\begin{multline}\label{int-id-00}
\iint \limits_{Q_T}{ (\tilde{g}_0(u_{\delta }))_t \phi \,dx dt}  -  \sigma^{-1}  \iint \limits_{Q_T }{
\chi_{\{ u_\delta > r_0\}} Q(u_{\delta} )J_{ \delta,x} \phi_x \,dx dt}  -
\iint \limits_{Q_T}{  Q (u_{ \delta})  \phi_x \,dx dt}  = 0
\end{multline}
\begin{equation} \label{int-id-2-00}
\iint \limits_{Q_T}{ J_{\delta } \psi \,dx dt} =
\iint \limits_{Q_T}{ \left[ \Phi''(u_{\delta, x})u_{\delta, xx} + \delta (\tilde{g}_0(u_{\delta}))_{xx}   -
\tfrac{f(u_{\delta, x})}{u_{\delta} } + \tfrac{A}{u^m_{\delta} } +  \tfrac{\delta}{u^\beta_{\delta} }  \right]  \psi \,dx dt}
\end{equation}
for all $ \phi \in L^2(0,T; H^1(\Omega))$ and $\psi \in L^2(Q_T)$.

Next, we pass to the limit $\delta \to 0$ in (\ref{int-id-00}). Note that a solution $u_{\delta}(x,t) \geqslant r_0$ in
$Q_T$. By (\ref{bb-1}), (\ref{bb-2}) and (\ref{mass-new-00}) we get
\begin{equation}\label{con-0}
 u_{ \delta}  \mathop {\to} \limits_{  \delta \to 0} u   \text{ strongly in } C(Q_T),
\end{equation}
\begin{equation}\label{con-0-0}
 u_{ \delta,x}  \mathop {\to} \limits_{\delta \to 0} u_{ x} \text{ a.\,e. in } Q_T,
\end{equation}
\begin{equation}\label{con-1}
 u_{  \delta}  \mathop {\to} \limits_{\delta \to 0} u  \text{*-weakly in } L^{\infty}(0,T; W_1^1(\Omega)) \cap
 L^{\infty}(0,T; L^2(\Omega))
\end{equation}
and a.\,e. in $Q_T$. By (\ref{bb-6}) and (\ref{con-0})
\begin{equation}\label{con-3}
 (\tilde{g}_0(u_{\delta }))_t \mathop {\to} \limits_{\delta \to 0}  (\tilde{g}_0(u))_t \text{ weakly in }
 L^{2}(0,T; (H^1(\Omega))^*).
\end{equation}
By (\ref{bb-10}), (\ref{bb-11}) and (\ref{con-0-0}) we get
\begin{equation}\label{con-4}
\Phi''(u_{\delta, x }) u_{\delta, xx } \mathop {\to} \limits_{\delta \to 0}
\chi_{\{|u_x|< \infty \} } \Phi''(u_{x }) u_{xx } \text{ strongly in }
 L^{2}(Q_T),
\end{equation}
\begin{equation}\label{con-5}
 \delta (\tilde{g}_0(u_{\delta}))_{xx}  \mathop {\to} \limits_{\delta \to 0}
0  \text{ strongly in }  L^{2}(Q_T).
\end{equation}
Letting $\delta \to 0$ in (\ref{int-id-00}) and (\ref{int-id-2-00}),
in view of (\ref{con-0})--(\ref{con-5}), we get a solution $u(x,t)$ which satisfies
(\ref{2:integral_form-nn}) and (\ref{int-id-2-00-2}).

\section{Travelling wave solutions}
\label{sec:tws}

We introduce a change of variables to the reference frame of the travelling wave,
$$
u(x,t) = v(\xi,t), \text{ where } \xi = x - V \, t.
$$
Substituting the change of variables to the PDE \eqref{r-000} leads to the following PDE for $v(\xi, t)$
\begin{equation}\label{tt-1}
v v_t +  \sigma^{-1} [ Q(v) \bigl( (\Phi'(v_{\xi}))_{\xi} -  v^{-1} f(v_{\xi}) +   A v^{-m}  \bigr)_{\xi} ]_{\xi}
+ (\mu Q(v) - \tfrac{V}{2} v^2)_{\xi} = 0
\end{equation}
with $L$-periodic boundary conditions on $\xi$, where $\mu =0$ corresponds to the case without gravity and $\mu = 1$
corresponds to the case with gravity. Note that the total mass of the film $M$ satisfies the conservation-of-mass condition,
$$
\int \limits_0^L {   v^2 (\xi,t) d\xi} = \int \limits_0^L { v_0^2 (\xi) d\xi}  =: M, \text{ where } v_0(\xi) := v (\xi,0).
$$
Let us denote
\begin{equation}
\mathcal{E}(v) := \int \limits_0^L { \{ v \Phi(v_{\xi}) + \tfrac{A}{m -2} v^{2-m}  \} d\xi},
\ \mathcal{L}(v) := \mathcal{E}(v) + \tfrac{\lambda}{2} \int \limits_0^L {   v^2 \, d\xi} \ \  \forall \, \lambda \in \mathbb{R}^1,
\label{eqn:energy}
\end{equation}
$$
P(v) := \{ \xi \in (0,L) : v(\xi) > r_0 \}, \ Z(v) := \{ \xi \in (0,L) : v(\xi) = r_0 \},
$$
$$
J(v) :=  (\Phi'(v_{\xi}))_{\xi} -  v^{-1} f(v_{\xi}) +   A v^{-m}  , \  \bar{M} := \tfrac{M}{L},
$$
\begin{multline*}
\mathcal{M} : = \Bigl \{ v \in L^2(0,L) \cap W_1^1(0,L) :  v \geqslant r_0 > 0,\  v(0) = v(L), 
 v'(0) = v'(L), \ \int \limits_0^L {   v^2   d\xi} = M \Bigr \}.
\end{multline*}
Here, $\mathcal{E}(v)$ is an energy functional that combines the surface tension of the liquid and a local free energy that corresponds to the film stabilization mechanism, $\lambda$ is a Lagrange multiplier, and $\mathcal{L}(v)$ incorporates both $\mathcal{E}(v)$ and the total mass of the film $M = \int_0^L{v^2 d\xi}$. The functional $-J(v)$ represents dynamic pressure of the liquid, and $\bar{M}$ is the average film thickness.

In the subsection \ref{sec:critical_points} -- \ref{sec:convergence_without_gravity}, we consider the case of  $\mu =0$ (without gravity), and the case  $\mu =1$ (with gravity) is studied
 in the subsection \ref{sec:convergence_with_gravity}.

\subsection{Critical points of the energy functional}
\label{sec:critical_points}
\begin{lemma}\label{L-ex}
Let $m > 2$ and $\mu =0$. Then, for every $M$, the functional
$\mathcal{E}$ attains its minimum $v_{\min}$ on  $\mathcal{M}$.
Moreover, if $0 \leqslant A < r_0^{m-1}$ then
$$
\mathcal{E}(v) \leqslant  \tfrac{\sqrt{2} M (r_0^{-1} - A\, \bar{M}^{-\frac{m}{2}} )(A + (m-2)r_0^{m-1}) }{(m-2)(r_0^{m-1} -A) }
\text{ as } t \to +\infty.
$$
\end{lemma}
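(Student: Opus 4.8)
The plan is to prove the two assertions separately: existence of the minimiser by the direct method of the calculus of variations, and the energy bound by testing $\mathcal{E}$ against a flat competitor. First note that $\mathcal{M}$ is nonempty precisely when $\bar{M}\geqslant r_0^2$, which is automatic, since $v\geqslant r_0$ and $\int_0^L v^2\,d\xi=M$ force $M\geqslant Lr_0^2$. On $\mathcal{M}$ the functional is bounded below: for $m>2$ the term $\tfrac{A}{m-2}v^{2-m}$ is nonnegative and $v\Phi(v_\xi)\geqslant v\geqslant r_0$, so $\mathcal{E}(v)\geqslant r_0L>0$. Taking a minimising sequence $v_n\in\mathcal{M}$ and using $v_n\geqslant r_0$, the uniform bound on $\int_0^L v_n\Phi(v_{n,\xi})\,d\xi$ controls $\int_0^L|v_{n,\xi}|\,d\xi$, so $\{v_n\}$ is bounded in $W_1^1(0,L)$; together with $\int v_n^2\,d\xi=M$ this gives a uniform $L^\infty$ bound (choosing $\xi_n$ with $v_n(\xi_n)=\sqrt{\bar{M}}$ and integrating $|v_{n,\xi}|$). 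By the compactness of $BV$ I extract a subsequence with $v_n\to v_*$ in $L^1$ and a.e.; the pointwise constraint $v_*\geqslant r_0$ and, after upgrading to $L^2$ convergence via the $L^\infty$ bound, the mass constraint $\int v_*^2\,d\xi=M$ both survive in the limit.

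The delicate point is lower semicontinuity of $\int_0^L v\Phi(v_\xi)\,d\xi$, since $z\mapsto\Phi(z)=\sqrt{1+z^2}$ has only linear growth and $W_1^1$ is not reflexive, so the gradients may concentrate and $v_*$ a priori lies only in $BV$. I would handle this either by relaxing this term to its $BV$ representative and invoking a Reshetnyak/Ioffe-type semicontinuity theorem (the integrand is convex in $v_\xi$ and continuous in $v$), or, mirroring the PDE proof, by adding the regularising term $\tfrac{\delta}{2}\int_0^L v^2v_\xi^2\,d\xi$ that renders the functional coercive and weakly lower semicontinuous on $H^1$, minimising for fixed $\delta$, and then sending $\delta\to0$ with the uniform estimates. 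The term $\tfrac{A}{m-2}\int v^{2-m}\,d\xi$ is lower semicontinuous by Fatou (nonnegative integrand, a.e.\ convergence). This yields $\mathcal{E}(v_*)\leqslant\liminf\mathcal{E}(v_n)$, so $v_*=v_{\min}$ attains the infimum; membership of $v_{\min}$ in $\mathcal{M}$ (the matching $v'(0)=v'(L)$ and the absence of jumps) follows from the regularity of the Euler--Lagrange equation $J(v)=\lambda$ on $P(v)$, a nondegenerate second-order ODE where $v>r_0$.

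For the quantitative bound I test against the constant $w\equiv\sqrt{\bar{M}}$, which is admissible because $\bar{M}\geqslant r_0^2$. Since $w_\xi\equiv 0$ and $\Phi(0)=1$,
\begin{equation*}
\mathcal{E}(v_{\min})\leqslant\mathcal{E}(w)=M\bar{M}^{-\frac12}+\tfrac{A}{m-2}M\bar{M}^{-\frac{m}{2}}.
\end{equation*}
Writing $x:=\bar{M}^{-1/2}\in(0,r_0^{-1}]$ and $C_0:=\tfrac{\sqrt2\,(A+(m-2)r_0^{m-1})}{(m-2)(r_0^{m-1}-A)}>0$ (positive exactly because $A<r_0^{m-1}$), the asserted bound is $M\,C_0\,(r_0^{-1}-Ax^m)$, so it suffices to check $x+\tfrac{A}{m-2}x^m\leqslant C_0(r_0^{-1}-Ax^m)$. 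The difference $D(x):=C_0r_0^{-1}-x-(C_0A+\tfrac{A}{m-2})x^m$ is strictly decreasing on $(0,\infty)$, while at the endpoint $D(r_0^{-1})=(\sqrt2-1)\bigl(r_0^{-1}+\tfrac{A}{m-2}r_0^{-m}\bigr)>0$; hence $D\geqslant D(r_0^{-1})>0$ on $(0,r_0^{-1}]$, which is the desired inequality. For the $t\to+\infty$ statement, in the case $\mu=0$ the relevant wave speed is $V=0$ and \eqref{tt-1} is the gradient flow of $\mathcal{E}$: using $\tfrac{\delta\mathcal{E}}{\delta v}=-vJ$ and $vv_t=-\sigma^{-1}(Q(v)J_\xi)_\xi$ one gets $\tfrac{d}{dt}\mathcal{E}(v(\cdot,t))=-\sigma^{-1}\int_0^L Q(v)(J_\xi)^2\,d\xi\leqslant0$, so $\mathcal{E}$ decreases to its value at the limiting steady state $v_{\min}$, giving $\mathcal{E}(v(\cdot,t))\to\mathcal{E}(v_{\min})\leqslant B$.

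I expect the main obstacle to be the existence/lower-semicontinuity step. Because the surface-tension integrand grows only linearly in $v_\xi$ and $W_1^1$ is non-reflexive, ruling out concentration of the gradients, equivalently showing that the relaxed $BV$ minimiser carries no singular part and actually lies in $\mathcal{M}$ with the periodic $C^1$ matching, is the technical heart; this is where the regularising $\delta$-term and the Euler--Lagrange regularity are really needed. By comparison the energy bound reduces to the elementary one-variable inequality $D(x)\geqslant 0$, and the $t\to+\infty$ assertion rests on the Lyapunov monotonicity of $\mathcal{E}$ together with convergence of the flow to $v_{\min}$.
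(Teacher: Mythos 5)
Your competitor argument for the global minimizer is correct as far as it goes: testing against $w\equiv\sqrt{\bar M}$ gives $\mathcal{E}(v_{\min})\leqslant\mathcal{E}(w)=Mx+\tfrac{A}{m-2}Mx^{m}$ with $x=\bar M^{-1/2}\in(0,r_0^{-1}]$, and your one-variable inequality checks out ($D$ is strictly decreasing and $D(r_0^{-1})=(\sqrt2-1)(r_0^{-1}+\tfrac{A}{m-2}r_0^{-m})>0$, which indeed equals $(\sqrt2-1)S/((m-2)r_0^{m})$ with $S=A+(m-2)r_0^{m-1}$). But this bounds $\mathcal{E}(v_{\min})$, whereas the lemma's second assertion is an asymptotic bound on $\mathcal{E}(v(\cdot,t))$ along an arbitrary solution of \eqref{tt-1}. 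The bridge you invoke, ``$\mathcal{E}$ decreases to its value at the limiting steady state $v_{\min}$,'' is precisely the unproven step and is a genuine gap: monotone dissipation only yields $\mathcal{E}(v(\cdot,t))\downarrow\inf_{t}\mathcal{E}(v(\cdot,t))\leqslant\mathcal{E}(v_0)$, and a gradient flow may converge (if it converges at all) to \emph{any} critical point --- a local minimizer or a multi-hump profile of the kind seen in the paper's numerics --- whose energy could a priori exceed the claimed bound. Convergence toward a minimizer is not available at this stage; it is the content of Theorem~\ref{Th-conv}, proved later and only under additional hypotheses, so using it here would also be circular. The paper closes the loop differently: it proves the functional inequality \eqref{tt-5-00}, $K_3\,\mathcal{E}^2(v)\leqslant\int_0^L Q(v)J_\xi^2(v)\,d\xi+K_4$, by pairing $v^2-r_0^2$ against $J-\bar J$ and estimating through the dissipation, and then combines it with the dissipation identity \eqref{tt-3} to get the Riccati inequality $\tfrac{d}{dt}\mathcal{E}+\sigma^{-1}K_3\mathcal{E}^2\leqslant\sigma^{-1}K_4$; ODE comparison gives $\mathcal{E}(v(\cdot,t))\leqslant\beta=\sqrt{K_4/K_3}$ as $t\to+\infty$ for \emph{every} admissible initial datum, with no convergence statement needed. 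Note that this inequality is also what excludes high-energy steady states (at a steady state the dissipation vanishes, so \eqref{tt-5-00} forces $\mathcal{E}\leqslant\beta$) --- something your competitor bound cannot see.

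Two further points. First, on existence your route (direct method, $BV$ compactness, Reshetnyak-type semicontinuity or a $\delta$-regularization) is genuinely different from the paper's, which simply observes that $\mathcal{E}$ is non-increasing along the flow and bounded below by $K_0=(r_0+\tfrac{A}{m-2}\bar M^{-\frac{m-2}{2}})L$ and concludes attainment of the minimum; your variational approach is the more standard one, but as you yourself flag, the lower-semicontinuity/concentration issue for the linear-growth integrand $v\Phi(v_\xi)$ is left unresolved, so this part remains a sketch rather than a proof. Second, your dissipation identity is correct but not because ``the relevant wave speed is $V=0$'': for $\mu=0$ the $V$-term drops out because $\int_0^L(v^2)_\xi J(v)\,d\xi=0$ (integration by parts using $\Phi(z)-z\Phi'(z)=f(z)$ and periodicity), which holds for every $V$; this is exactly how the paper derives \eqref{tt-3}.
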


\begin{proof}[Proof of Lemma~\ref{L-ex}]

First of all, we will show that $ \mathcal{E}(v)$ dissipates.
Multiplying (\ref{tt-1}) by $ - J  $ and integrating on $\xi$, we find that
\begin{equation}\label{tt-2}
\tfrac{d}{dt} \mathcal{E}(v) + \sigma^{-1} \int \limits_{0}^{L} { Q(v) J_{\xi}^2(v) d\xi} =
- \tfrac{V}{2} \int \limits_{0}^{L} { ( v^2)_{\xi} J(v) d\xi} .
\end{equation}
As
$$
\int \limits_{0}^{L} { (v^2)_{\xi} J(v) d\xi} = 0
$$
then by (\ref{tt-2})
\begin{equation}\label{tt-3}
\tfrac{d}{dt} \mathcal{E}(v) + \sigma^{-1} \int \limits_{0}^{L} { Q(v) J_{\xi}^2(v) d\xi} =
0 ,
\end{equation}
whence
$$
\tfrac{d}{dt} \mathcal{E}(v) \leqslant 0 \Rightarrow \mathcal{E}(v(t)) \leqslant \mathcal{E}(v_0(\xi)) \ \ \ \forall\, t \geqslant 0.
$$
Moreover, taking into account
$$
\int \limits_{0}^{L} { v \Phi (v_{\xi})  d\xi} \geqslant r_0 L,
\ \
\int \limits_{0}^{L} {  v^{2-m}  d\xi} \geqslant L\,\bar{M}^{- \frac{m-2}{2}},
$$
we deduce that
$$
\mathcal{E}(v) \geqslant  K_0 := ( r_0  + \tfrac{A}{m -2}\bar{M}^{- \frac{m-2}{2}}  ) L.
$$
The functional $\mathcal{E}(v)$ is non-increasing and bounded from below therefore 
it attains its minimum $v_{\min}$ on the set $\mathcal{M}$.

Let us denote by
$$
\bar{J} : = -\tfrac{1}{L}  \int \limits_0^L {  v^{-1} f(v_{\xi}) \, d\xi}  +
\tfrac{A}{L} \int \limits_0^L {   v^{-m}   \, d\xi} \geqslant -r_0^{-1} + A\, \bar{M}^{-\frac{m}{2}} .
$$
Note that
\begin{multline*}
\int \limits_0^L { (v^2 -r_0^2) ( J(v) - \bar{J})  \, d\xi} = \int \limits_0^L { v^2 J(v)  \, d\xi} - M\,\bar{J}  \\
= -  2 \int \limits_0^L { v  \Phi(v_{\xi}) \, d\xi} +
  \int \limits_0^L {  v \, f(v_{\xi}) \, d\xi}  + A \int \limits_0^L {   v^{2-m}   \, d\xi} - M\,\bar{J} \leqslant 
 -   \int \limits_0^L { v  \Phi(v_{\xi}) \, d\xi} + A  \int \limits_0^L {   v^{2-m}   \, d\xi} - M\,\bar{J} ,
\end{multline*}
whence
$$
\int \limits_0^L { v  \Phi(v_{\xi}) \, d\xi} - A \int \limits_0^L {   v^{2-m}   \, d\xi} \leqslant
- \int \limits_0^L { (v^2 -r_0^2) ( J(v) - \bar{J})  \, d\xi}  - M\,\bar{J}.
$$
As
$$
\int \limits_0^L { v  \Phi(v_{\xi}) \, d\xi} - A \int \limits_0^L {   v^{2-m}   \, d\xi} \geqslant
K_1 \mathcal{E}(v),
$$
where
$$
K_1 := \tfrac{(m-2)(r_0^{m-1} -A)}{A + (m-2)r_0^{m-1}} \in (0,1] \text{ provided } 0 \leqslant A < r_0^{m-1},
$$
then
\begin{equation} \label{tt-5}
 K_1  \mathcal{E}(v) \leqslant  - \int \limits_0^L { (v^2 -r_0^2) ( J(v) - \bar{J})  \, d\xi} - M\,\bar{J}.
\end{equation}
On the other hand, we have
\begin{multline*}
- \int \limits_0^L { (v^2 -r_0^2) ( J(v) - \bar{J})  \, d\xi} = -   \int \limits_0^L { (v^2 -r_0^2)
 \Bigl(  \int \limits_{\xi_0}^{\xi}  {J_{s}(v(s))\, ds } \Bigr)  \, d\xi} 
 \leqslant    \\
\Bigl(  \int \limits_{0}^{L}  {Q(v) J^2_{\xi}(v)\, d\xi } \Bigr)^{\frac{1}{2}}
\int \limits_0^L { (v^2 -r_0^2)
 \Bigl(  \int \limits_{\xi_0}^{\xi}  {\tfrac{ ds }{Q(v(s))}} \Bigr)^{\frac{1}{2}} \, d\xi} \leqslant 
 K_2 \Bigl(  \int \limits_{0}^{L}  {Q(v) J^2_{\xi}(v)\, d\xi } \Bigr)^{\frac{1}{2}}.
\end{multline*}
So, from (\ref{tt-5}) we obtain
\begin{equation}\label{tt-5-00}
K_3 \mathcal{E}^2(v) \leqslant \int \limits_{0}^{L} { Q(v) J_{\xi}^2(v) d\xi} + K_4 ,
\end{equation}
where
$$
K_3 = \tfrac{1}{2} ( \tfrac{K_1}{K_2})^2, \ \ K_4 = ( \tfrac{M  }{K_2})^2 (r_0^{-1} - A\, \bar{M}^{-\frac{m}{2}} )^2_+ .
$$
Then by (\ref{tt-3})  we arrive at
\begin{equation}\label{tt-6}
\tfrac{d}{dt} \mathcal{E}(v) + \sigma^{-1} K_3 \mathcal{E}^2(v) \leqslant
 \sigma^{-1} K_4 .
\end{equation}
Compare (\ref{tt-6}) with a solution of the following problem
\begin{equation}\label{tt-7}
y'(t) \leqslant \alpha \bigl(   \beta^2 -  y^2(t) \bigr)
\text{ with } y(0) =y_0 ,
\end{equation}
where $ y(t) :=  \mathcal{E}(v) > 0$,
$$
\alpha = \sigma^{-1} K_3, \ \beta^2 =  \tfrac{K_4}{K_3} = 2 ( \tfrac{M  }{K_1})^2 (r_0^{-1} - A\, \bar{M}^{-\frac{m}{2}} )^2_+ .
$$
Then we have
$$
y (t) \leqslant  \beta \, \frac{1+ \frac{y_0 -\beta}{y_0 + \beta} e^{-2\alpha \beta t}}{ 1 - \frac{y_0 -\beta}{y_0 + \beta} e^{-2\alpha \beta t}} \ \ \  \forall\, t \geqslant 0.
$$
As a result, we obtain that
\begin{equation}\label{rt-2}
 \mathcal{E}(v) \leqslant \beta  \text{ as } t \to +\infty
\end{equation}
provided
$$
0 \leqslant A < r_0^{m-1}  .
$$
\end{proof}

\subsection{Structure of energy functional minimizers}
\label{sec:minimizers}

The Euler-Lagrange equation for $\mathcal{E}(v)$ under the  constraint $ \int \limits_0^L {   v^2   d\xi} = M$ is given by
\begin{equation}\label{rt-3}
(\Phi'(v' ))'  - v^{-1}f(v' ) + A v ^{-m} = \lambda,
\end{equation}
where $\lambda$ is the Lagrange multiplier. We also need to incorporate the
constraint $v \geqslant r_0 > 0$. If $v \in \mathcal{M}$, we decompose $(0,L)$ according to the value of $v$ into two sets, $P(v)$ and $Z(v)$.

We compute the first variation of
$\mathcal{L}(v)$
about $v_{\min}$ and obtain
$$
\tfrac{d}{d\epsilon} \mathcal{L}(v_{\min} + \epsilon) \Bigl |_{\epsilon = 0} =
\int \limits_0^L {  \{ v  \Phi'(v_{\xi}) \phi'  + (\Phi(v_{\xi}) - A v^{1-m}  + \lambda v)\phi   \}   d\xi},
$$
where $\phi$ is a smooth test function supported in $P(v)$. 
Since the first variation of 
$\mathcal{L}(v)$
along
$v_{\min} + \epsilon$ must vanish for every such test function $\phi$, the Euler-Lagrange equation (\ref{rt-3}) holds.

Similarly, when we take $\phi$ to be a smooth test function supported in $Z(v)$, the first variation of $\mathcal{L}(v)$ along $v_{\min} + \epsilon$ must vanish for every such $\phi$, and we obtain 
$$
\lambda = A r_0^{- m} - r_0^{-1} \text{ on } Z(v).
$$
\hspace{-0.1in} 
Therefore, the energy minimizer $v_{\min}$ satisfies  \eqref{rt-3} on $(0,L)$ in the sense of distributions as
$\lambda$ in (\ref{rt-3}) is a piece-wise constant function, namely,
$$
\lambda(\xi) =  \left \{
\begin{gathered}
\hfill \lambda^* \ \ \  \text{ for }  \xi \in P(v),  \\
 A r_0^{- m} - r_0^{-1}  \text{ for }  \xi \in Z(v).
\end{gathered}
 \right.
$$

\begin{lemma}\label{Lem-1}
Let $m > 2$, $\mu =0$, and $0 \leqslant A <  r_0^{m-1}$.
If $v_{\min}$ minimizes $\mathcal{E}$ on $\mathcal{M}$, then it solves (\ref{rt-3}) with $\lambda(\xi)$ on $(0,L)$.
The Lagrange multiplier $\lambda(\xi)$ is negative and $\lambda^*$ satisfies
\begin{equation}
 A r_0^{ -m} - r_0^{-1} \leqslant \lambda^* = - \tfrac{2}{r_0^2}(r_0 - C_0 + \tfrac{A}{m-2}r_0^{2-m}),
\label{eq:A>0lambda}
\end{equation}
where
$$
\tfrac{1}{2}(r_0 + \tfrac{A\,m}{m-2}r_0^{2-m}) \leqslant  C_0 < r_0 + \tfrac{A}{m-2}r_0^{2-m}.
$$
Furthermore, $v_{\min}$  is of class $C^1$, and $v'_{\min} =0$ on $ \partial P(v_{\min})$.
\end{lemma}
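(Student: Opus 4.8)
The statement bundles four assertions: the Euler--Lagrange equation with a piecewise-constant multiplier, the negativity of $\lambda$, the two-sided estimate on $\lambda^*$ encoded in (\ref{eq:A>0lambda}), and the $C^1$ regularity with vanishing contact angle. The plan is to treat them in an order that defers the free-boundary regularity to the very end, since that is the genuine difficulty; the first three parts are then comparatively soft.

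First I would record the Euler--Lagrange structure. Existence of a minimizer $v_{\min}\in\mathcal{M}$ is Lemma~\ref{L-ex}. On the open set $P(v_{\min})$ the constraint $v\geqslant r_0$ is inactive, so two-sided variations are admissible and the first-variation computation carried out just before the lemma shows that $v_{\min}$ solves (\ref{rt-3}) with a constant multiplier $\lambda^*$. Standard ODE bootstrapping of (\ref{rt-3}) then gives $v_{\min}\in C^\infty(P(v_{\min}))$, since the principal part $(\Phi'(v'))'=\Phi''(v')v''=f^3(v')v''$ is locally uniformly elliptic while $v>r_0$. On the coincidence set $Z(v_{\min})$ one has $v_{\min}\equiv r_0$, hence $v'_{\min}=0$, and there (\ref{rt-3}) reads $\lambda=Ar_0^{-m}-r_0^{-1}$; this produces the piecewise-constant $\lambda(\xi)$. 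Because $A<r_0^{m-1}$ forces $Ar_0^{-m}<r_0^{-1}$, the value of $\lambda$ on $Z(v_{\min})$ is already negative.

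Next I would obtain the two bounds on $\lambda^*$, which together are exactly (\ref{eq:A>0lambda}) after setting $C_0:=r_0+\tfrac{A}{m-2}r_0^{2-m}+\tfrac{r_0^2}{2}\lambda^*$: one checks directly that the inequalities $Ar_0^{-m}-r_0^{-1}\leqslant\lambda^*<0$ are equivalent to the stated bounds on $C_0$. For the \emph{lower} bound I would use the variational inequality coming from the obstacle, namely that on $Z(v_{\min})$ only upward perturbations $\phi\geqslant 0$ are admissible, so the first variation of $\mathcal{L}$ is $\geqslant 0$ there; with $v'_{\min}=0$ and $\Phi(0)=1$ this reduces to $1-Ar_0^{1-m}+\lambda^* r_0\geqslant 0$, i.e. $\lambda^*\geqslant Ar_0^{-m}-r_0^{-1}$. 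For the \emph{upper} bound I would evaluate (\ref{rt-3}) at an interior maximum $\xi_{\max}$ of $v_{\min}$ on a component of $P(v_{\min})$: there $v'=0$ and $v''\leqslant 0$, so $(\Phi'(v'))'=v''\leqslant 0$ and (\ref{rt-3}) yields $\lambda^*\leqslant Av_{\max}^{-m}-v_{\max}^{-1}$, which is strictly negative since $A<r_0^{m-1}\leqslant v_{\max}^{m-1}$. This simultaneously proves $\lambda^*<0$, hence $\lambda(\xi)<0$ on all of $(0,L)$. The same value can alternatively be produced by multiplying (\ref{rt-3}) by $v^2-r_0^2$ and integrating, using the definition of $\bar{J}$ and $\int_0^L v^2=M$, with $C_0$ read off as a weighted average of the energy density; but the maximum-principle route above is the cleanest and needs no boundary information.

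The main obstacle is the last part: $v_{\min}\in C^1(0,L)$ with $v'_{\min}=0$ on $\partial P(v_{\min})$. Away from $\partial P$ the regularity is already in hand (smooth on $P$, constant on $Z$), so all the work is the matching at a contact point $\xi_0\in\partial P$, where $v_{\min}(\xi_0)=r_0$ is the minimal value. Here I would argue by energy comparison, exploiting that the surface density $v\,\Phi(v')=v\sqrt{1+(v')^2}$ is strictly convex in $v'$: if the one-sided derivative from the $P$-side at $\xi_0$ were nonzero, then replacing $v_{\min}$ near $\xi_0$ by a competitor that flattens the corner, and compensating the small mass change elsewhere on $P$ so as to remain in $\mathcal{M}$, would strictly decrease $\mathcal{E}$, contradicting minimality; since the $Z$-side derivative is $0$, this forces $v'_{\min}(\xi_0)=0$ together with continuity of $v'_{\min}$ across $\xi_0$. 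I expect this zero-contact-angle step to carry essentially all the analytic weight, because one must construct admissible competitors respecting both the pointwise obstacle $v\geqslant r_0$ and the mass constraint while controlling the linear-growth (area-type) functional $\int_0^L v\,\Phi(v')$; this is precisely where the strict convexity of $\Phi$ and the nondegeneracy $v\geqslant r_0>0$ enter. Once $C^1$ regularity and $v'_{\min}=0$ on $\partial P$ are established, the integrations by parts implicit above (with boundary terms vanishing at $\partial P$) are justified a posteriori, which closes the argument.
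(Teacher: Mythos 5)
Your handling of the Euler--Lagrange structure, the negativity of $\lambda$, and the two bounds in (\ref{eq:A>0lambda}) is correct, and it takes a genuinely different route from the paper. The paper never argues by KKT inequalities or a maximum principle: it integrates (\ref{rt-3}) explicitly on $P(v_{\min})$, obtaining the first integral $f(v') = \tfrac{A}{2-m}v^{1-m} - \tfrac{\lambda}{2}v + C_0 v^{-1}$ (its (\ref{t-2-0})), rules out the sign patterns $\lambda \geqslant 0$ and $C_0 \leqslant 0$ because they admit no nontrivial smooth periodic orbits, and reads both bounds off the phase-plane geometry: the upper bound on $C_0$ (equivalently $\lambda^* < 0$) from the existence of two positive turning points, and the lower bound from the requirement that $r_0$ be the smaller turning point $v_1^*(A)$. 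Your one-sided variation on $Z(v_{\min})$ and your evaluation of (\ref{rt-3}) at an interior maximum reach the same bounds more softly (note that both your argument and the paper's pre-lemma derivation of the piecewise multiplier tacitly assume $Z(v_{\min})$ has positive measure). What the paper's heavier computation buys is the explicit solution profile and the period formulas (\ref{t-tau}), (\ref{t-tau-2}) that the numerical section relies on.

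The genuine gap is the last assertion, which you defer and never prove: $v_{\min}\in C^1$ with $v'_{\min}=0$ on $\partial P(v_{\min})$. You sketch a corner-flattening competitor argument and concede yourself that it carries ``essentially all the analytic weight''; as submitted, the proof is incomplete precisely at its hardest point. The omission is compounded by your decision to \emph{define} $C_0 := r_0 + \tfrac{A}{m-2}r_0^{2-m} + \tfrac{r_0^2}{2}\lambda^*$, which turns (\ref{eq:A>0lambda}) into a tautology. That is admissible against the literal statement, but in the paper $C_0$ is the first-integral constant of (\ref{rt-3}) on a component of $P(v_{\min})$, and (\ref{eq:A>0lambda}) is exactly the condition that $f(v')=1$, i.e.\ $v'=0$, at $v=r_0$: the identity and the zero-contact-angle claim are one and the same computation there, so by defining the identity away you discard the very mechanism that proves the part you left open. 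Note finally that your own KKT idea closes the gap more cheaply than the competitor construction you propose: at a contact point $\xi_0$ the distributional Euler--Lagrange inequality acquires the atom $-r_0\bigl(\Phi'(v'(\xi_0^+)) - \Phi'(v'(\xi_0^-))\bigr)\delta_{\xi_0}$, and testing with $\phi\geqslant 0$ concentrated at $\xi_0$ forces $\Phi'(v'(\xi_0^+)) \leqslant \Phi'(v'(\xi_0^-))$; since $v\geqslant r_0 = v(\xi_0)$ gives $v'(\xi_0^+)\geqslant 0 \geqslant v'(\xi_0^-)$ and $\Phi'$ is odd and increasing, both one-sided slopes must vanish, yielding the $C^1$ matching and the zero contact angle in one stroke.
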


\begin{proof}[Proof of Lemma~\ref{Lem-1}]

Next, we consider (\ref{rt-3}) on  $P(v)$. By the substitution
$$
v'(\xi) = z(v) \neq 0,
$$
we have
\begin{multline} \label{t-2}
z f^3(z) z'  - v^{-1}f(z) + A v^{-m} = \lambda \Leftrightarrow 
 (f(z))' + v^{-1}f(z) - A v^{-m} = - \lambda \\\Leftrightarrow 
  (v f(z))' = A v^{1-m} - \lambda v  , \text{ where } v \neq 0.
\end{multline}
On the other hand, if $v' = 0$ then
$$
v =  \bar{M}^{\frac{1}{2}} \text{ and } \lambda =  \bar{M}^{-\frac{m}{2}} (A -  \bar{M}^{\frac{m-1}{2}} ).
$$
The equation (\ref{t-2}) has the following general solution
\begin{equation}\label{t-2-0}
f(z) = \tfrac{A}{2-m} v^{1-m} - \tfrac{\lambda}{2} v  + C_0 v^{-1} ,
\end{equation}
where $ C_0 \in \mathbb{R}^1$.
For the rest of the proof, we will separately consider the case $A = 0$ and the case $A > 0$, $m > 2$.

\underline{\emph{Case $A = 0$}}: In this case, by (\ref{t-2-0}) we get
\begin{equation}
    f(z) =  v^{-1}( C_0 - \tfrac{\lambda}{2} v^2)
    \label{eqn:A=0_f}
\end{equation}
provided
$$
v^2 + \tfrac{2}{\lambda} v - \tfrac{2C_0}{\lambda} \leqslant 0 \Rightarrow
v_1  \leqslant v \leqslant v_2 , \  \  - \tfrac{1}{2} \leqslant \lambda C_0 < 0,
$$
where
\begin{equation}
v_1:= -\tfrac{1}{\lambda}(1 - \sqrt{1 + 2C_0 \lambda} )  , \ v_2 :=  -\tfrac{1}{\lambda}(1 + \sqrt{1 + 2C_0 \lambda} ) .
\label{eqn:A=0v1v2}
\end{equation}
Note that here we only consider the case $\lambda < 0$ and $C_0 > 0$. This is because for $\lambda \geqslant 0$ or $C_0 \leqslant 0$, equation \eqref{eqn:A=0_f} does not have any real-valued non-trivial periodic solution $v \geqslant 0$. In particular, since
$0 \leqslant f(z) = (1+z^2)^{-\frac{1}{2}} \leqslant 1$, the case $\lambda \geqslant 0$ and $C_0 \leqslant 0$ cannot lead to any real-valued solution $v \geqslant 0$ to  \eqref{eqn:A=0_f}.
For the case $\lambda > 0$ and $C_0 > 0$, we have $v \leqslant v_1$ or $v \geqslant v_2$ where $v_1$ and $v_2$ are defined in \eqref{eqn:A=0v1v2} and $v_1 < 0 < v_2$. Smooth periodic solutions that satisfy these criteria do not exist. Moreover, for $C_0 < 0$ and $\lambda < 0$, we have $v_1 < 0 <  v \leqslant v_2$ and non-negative smooth periodic solutions do not exist for this case.

From here we arrive at
$$
z^2(v) = v'^2(\xi) = - \frac{(v^2 - v_1^2)(v^2 - v_2^2)}{(v^2 - \frac{2C_0}{\lambda})^2}.
$$
This equation has a periodic solution with the period
\begin{equation}\label{t-tau}
\tau = 2 \int \limits_{v_1}^{v_2} { \sqrt{ - \frac{(s^2 - \frac{2C_0}{\lambda})^2}{(s^2 - v_1^2)(s^2 - v_2^2)} }\,  ds}.
\end{equation}
In the limit case when the droplet touches the substrate surface, we have the minimum and the maximum of the periodic solution given by
\begin{equation}
    v_1 = r_0, \quad v_2 = \frac{  C_0 r_0 }{r_0 - C_0}, \quad \text{where}~ \frac{r_0}{2} \leqslant  C_0 < r_0,
    \label{eq:A=0v1v2}
\end{equation}
and the corresponding $\lambda^*$ satisfies
\begin{equation}
    -r_0^{-1} \leqslant \lambda^* = - \frac{2(r_0 - C_0)}{r_0^2}.
    \label{eq:lambstar}
\end{equation}
A typical smooth periodic solution $v(\xi)$ defined on $a \leqslant \xi \leqslant a + \tau$ with $v(a)= v(a +\tau) = v_1 = r_0$, $v'(a)= v'(a +\tau) = 0$, and $P(v) =(a,a+\tau) \subset (0, L) $
is given by
\begin{equation}\label{ex-sol}
\begin{gathered}
\int \limits_{v_1}^{v(\xi)} { \sqrt{ - \frac{(s^2 - \frac{2C_0}{\lambda})^2}{(s^2 - v_1^2)(s^2 - v_2^2)} }\,  ds} = \xi
\text{ for } \xi \in [a, a + \tfrac{\tau}{2}],     \\
\int \limits_{v_2}^{v(\xi)} { \sqrt{ - \frac{(s^2 - \frac{2C_0}{\lambda})^2}{(s^2 - v_1^2)(s^2 - v_2^2)} }\,  ds} = a + \tfrac{\tau}{2} - \xi
\text{ for } \xi \in [a + \tfrac{\tau}{2}, a+ \tau].
\end{gathered}
\end{equation}
If we set $C_0 = \tfrac{r_0}{2}$, then from \eqref{eq:lambstar} and \eqref{ex-sol} we obtain $\lambda^* = -r_0^{-1}$ and the trivial solution $v \equiv r_0$.

\underline{\emph{Case $A > 0,\, m > 2$}}: Similar to the case $A = 0$,  we only need to consider the case $\lambda < 0$ and $C_0 > 0$. In this case, by (\ref{t-2-0}) we get
$$
f(z) =  v^{1-m}( - \tfrac{\lambda}{2} v^m  + C_0 v^{m -2} - \tfrac{A}{m-2})
$$
provided
\begin{multline}\label{rr-tt}
0 \leqslant - \tfrac{\lambda}{2} v^m  + C_0 v^{m -2} - \tfrac{A}{m-2} \leqslant v^{m-1} \Rightarrow    \\
 g(v) := v^{m-2}(v-v_1)(v -v_2) \leqslant - \tfrac{2A}{\lambda(m-2)}.
\end{multline}
So, if
$$
\max \{- \tfrac{\lambda(m-2)}{2} g(\tilde{v}_{\min}), 0 \} < A \leqslant A^* := - \tfrac{\lambda(m-2)}{2} g(\tilde{v}_{\max}),
$$
where
$$
 \tilde{v}_{\max}:= - \tfrac{m-1}{m \lambda}(1- \sqrt{1 + \tfrac{2m(m-2) }{(m-1)^2}\lambda C_0 }),\quad
  \tilde{v}_{\min}:= - \tfrac{m-1}{m \lambda}(1+ \sqrt{1 + \tfrac{2m(m-2)}{(m-1)^2}\lambda C_0 }),
$$
and $ - \tfrac{(m-1)^2}{2m(m-2)} < \lambda C_0 < 0$, then there exist  $v_1^*(A) $ and $ v_2^*(A) $ such that
$$
0 < v_1^*(A) < v_1 < v_2 < v_2^* (A) , \  \ v_i^* (A) \to v_i \text{ as } A \to 0,
$$
and (\ref{rr-tt}) is true for $v_1^*(A) \leqslant v \leqslant v_2^* (A)  $.
From here we arrive at
$$
z^2(v) = v'^2(\xi) =
- \tfrac{[v^{m-2}(v -v_1)(v - v_2) + \frac{2A}{\lambda(m-2)} ][v^{m-2} (v + v_1)(v + v_2) + \frac{2A}{\lambda(m-2)}]}
{[v^{m-2}(v^2 - \frac{2C_0}{\lambda}) + \frac{2A}{\lambda(m-2)}]^2}.
$$
This equation has a smooth periodic solution with the period
\begin{equation}\label{t-tau-2}
\tau = 2 \int \limits_{v_1^*(A)}^{v_2^*(A)} { \sqrt{ - \tfrac{[s^{m-2}(s^2 - \frac{2C_0}{\lambda}) + \frac{2A}{\lambda(m-2)}]^2}{[s^{m-2}(s -v_1)(s - v_2) + \frac{2A}{\lambda(m-2)} ][s^{m-2}(s + v_1)(s + v_2) + \frac{2A}{\lambda(m-2)}]}
}\,  ds}.
\end{equation}
Taking the minimum of the solution $v^*_1(A) = r_0$, we have
\begin{equation}
A r_0^{ -m} - r_0^{-1} \leqslant \lambda^* = - \tfrac{2}{r_0^2}(r_0 - C_0 + \tfrac{A}{m-2}r_0^{2-m}),
\end{equation} 
\begin{equation}
    \tfrac{1}{2}(r_0 + \tfrac{A\,m}{m-2}r_0^{2-m}) \leqslant  C_0 < r_0 + \tfrac{A}{m-2}r_0^{2-m}, \
0 \leqslant  A < r_0^{m-1}.
\end{equation}
If we set $v(a)= v(a +\tau) = v_1 = r_0$ and $P(v) =(a,a+\tau) \subset (0, L) $, then
our smooth periodic solution $v(\xi)$ is given by
$$
\int \limits_{v_1^*(A)}^{v(\xi)} { \sqrt{ - \tfrac{[s^{m-2}(s^2 - \frac{2C_0}{\lambda}) + \frac{2A}{\lambda(m-2)}]^2}{[s^{m-2}(s -v_1)(s - v_2) + \frac{2A}{\lambda(m-2)} ][s^{m-2}(s + v_1)(s + v_2) + \frac{2A}{\lambda(m-2)}]}
} \,  ds} = \xi \text{ for } \xi \in [a, a+ \tfrac{\tau}{2}],
$$
$$
\int \limits_{v_2^*(A)}^{v(\xi)} { \sqrt{ - \tfrac{[s^{m-2}(s^2 - \frac{2C_0}{\lambda}) + \frac{2A}{\lambda(m-2)}]^2}{[s^{m-2}(s -v_1)(s - v_2) + \frac{2A}{\lambda(m-2)} ][s^{m-2}(s + v_1)(s + v_2) + \frac{2A}{\lambda(m-2)}]}
} \,  ds} = a+ \tfrac{\tau}{2} - \xi \text{ for } \xi \in [a+ \tfrac{\tau}{2}, a+ \tau],
$$
and $v'(a) = v'(a+\tau) = 0$.
Furthermore, if $C_0 = \frac{1}{2}(r_0 + \frac{A\,m}{m-2}r_0^{2-m})$ then we obtain the trivial solution $v \equiv r_0$ on $(0,L)$.

On the other hand, if $\lambda > 0$   and $C_0 > 0$ then
$$
f(z) =  v^{1-m}( - \tfrac{\lambda}{2} v^m  + C_0 v^{m -2} - \tfrac{A}{m-2})
$$
provided
\begin{multline*}
0 \leqslant - \tfrac{\lambda}{2} v^m  + C_0 v^{m -2} - \tfrac{A}{m-2} \leqslant v^{m-1} \Rightarrow    \\
 g(v) := v^{m-2}(v-v_1)(v -v_2) \geqslant - \tfrac{2A}{\lambda(m-2)}.
\end{multline*}
Since $v_2 < 0 < v_1  $, smooth periodic solutions do not exist.

If $\lambda =0$   and $C_0 > 0$ then
$$
f(z) =  v^{1-m}( C_0 v^{m -2} - \tfrac{A}{m-2})
$$
provided
\begin{multline*}
0 \leqslant C_0 v^{m -2} - \tfrac{A}{m-2} \leqslant v^{m-1} \Rightarrow    
v \geqslant ( \tfrac{A}{C_0(m-2)})^{\frac{1}{m-2}} \text{ and }  v^{m-2}(v- C_0) \geqslant - \tfrac{ A}{ m-2 }.
\end{multline*}
Therefore, we do not have any smooth periodic solutions.
\end{proof}

\subsection{Convergence to an energy minimizer without gravity}
\label{sec:convergence_without_gravity}
Next we study the long-time behavior of solutions to \eqref{tt-1}.
We start by considering the case without the gravitational term ($\mu = 0$).
\begin{theorem}\label{Th-conv}
Let $m > 2$, $\mu =0$, $v(\xi,t)$ be a weak solution to (\ref{tt-1}) with periodic boundary conditions,
and $v_{\min}(\xi)$ be a solution from Lemma~\ref{Lem-1}. Assume that
$$
0 \leqslant A < r_0^{m-1}, \ \mathcal{E}(v_{\min}) \geqslant \mathcal{E}^* := \tfrac{ M (r_0^{-1} - A\, \bar{M}^{-\frac{m}{2}} )(A + (m-2)r_0^{m-1}) }{(m-2)(r_0^{m-1} -A) }.
$$
Then there exist $B_0 > 0$ and $B_1 > 0$  such that
$$
0 \leqslant \mathcal{E}(v) -\mathcal{E}(v_{\min}) \leqslant \tfrac{B_0 }{1 + B_1\, t},
$$
and
\begin{equation}\label{cc-8}
v (.,t) \to v_{\min}(.) \text{ weakly in } W^1_1(0,L)\text{ as } t \to +\infty .
\end{equation}
\end{theorem}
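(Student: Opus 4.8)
My plan splits the argument into a quantitative energy-decay estimate and a compactness argument, and the heart of the matter is already contained in the computations of Lemma~\ref{L-ex}. The left inequality $0\le \mathcal{E}(v)-\mathcal{E}(v_{\min})$ is immediate: mass conservation together with $v\ge r_0$ and the periodic boundary conditions keep $v(\cdot,t)\in\mathcal{M}$ for every $t$, so $\mathcal{E}(v(t))\ge \mathcal{E}(v_{\min})$ because $v_{\min}$ minimises $\mathcal{E}$ on $\mathcal{M}$. For the rate I would reuse the chain of estimates in the proof of Lemma~\ref{L-ex}: the bound \eqref{tt-5} followed by the Cauchy--Schwarz step yields $K_1\mathcal{E}(v)\le K_2\bigl(\int_0^L Q(v)J_\xi^2(v)\,d\xi\bigr)^{1/2}-M\bar J$, and since $\bar J\ge -r_0^{-1}+A\bar M^{-m/2}$ gives $-M\bar J\le M(r_0^{-1}-A\bar M^{-m/2})=K_1\mathcal{E}^*$, this rearranges to the production inequality
\[
K_1\bigl(\mathcal{E}(v)-\mathcal{E}^*\bigr)\le K_2\,D(v)^{1/2},\qquad D(v):=\int_0^L Q(v)J_\xi^2(v)\,d\xi .
\]

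Here is where the hypothesis $\mathcal{E}(v_{\min})\ge\mathcal{E}^*$ is used: it gives $\mathcal{E}(v)-\mathcal{E}(v_{\min})\le \mathcal{E}(v)-\mathcal{E}^*$, so writing $w(t):=\mathcal{E}(v(t))-\mathcal{E}(v_{\min})\ge0$ we obtain $w\le (K_2/K_1)D(v)^{1/2}$, i.e. $w^2\le (K_2/K_1)^2 D(v)$. By the dissipation identity \eqref{tt-3} one has $D(v)=-\sigma\,\tfrac{d}{dt}\mathcal{E}(v)=-\sigma\,w'$, so $w$ satisfies the differential inequality $w'\le -c\,w^2$ with $c:=\sigma^{-1}(K_1/K_2)^2$. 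Integrating $(1/w)'\ge c$ from $0$ to $t$ yields $w(t)\le w(0)/(1+c\,w(0)\,t)$, which is exactly the asserted bound with $B_0=\mathcal{E}(v_0)-\mathcal{E}(v_{\min})$ and $B_1=c\,B_0$ (the case $B_0=0$ being trivial).

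For the convergence \eqref{cc-8} I would argue by compactness and lower semicontinuity. The polynomial bound gives $\mathcal{E}(v(t))\to\mathcal{E}(v_{\min})$. Since $\Phi(z)\ge|z|$ and $v\ge r_0$, the energy controls $\int_0^L|v_\xi|\,d\xi\le r_0^{-1}\mathcal{E}(v)\le r_0^{-1}\mathcal{E}(v_0)$, while $\int_0^L v^2\,d\xi=M$; hence $\{v(\cdot,t)\}_t$ is uniformly bounded in $W_1^1(0,L)$ and in $L^\infty(0,L)$, and therefore in $BV(0,L)$. Along any sequence $t_n\to+\infty$ I extract a subsequence with $v(\cdot,t_n)\to v_\infty$ strongly in $L^1$ and a.e.; the $L^\infty$ bound upgrades this to $L^2$, so $v_\infty\ge r_0$, $\int_0^L v_\infty^2=M$, and $v_\infty\in\mathcal{M}$. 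Lower semicontinuity of $\mathcal{E}$ then forces $\mathcal{E}(v_\infty)\le\liminf_n\mathcal{E}(v(\cdot,t_n))=\mathcal{E}(v_{\min})\le\mathcal{E}(v_\infty)$, so $v_\infty$ is itself a minimiser; by uniqueness of the minimiser from Lemma~\ref{Lem-1} (up to the normalisation fixing its position) $v_\infty=v_{\min}$, and the usual subsequence argument promotes this to convergence of the whole family.

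The main obstacle is the final upgrade, from this $L^1$/$BV$ convergence to \emph{weak} convergence in $W_1^1$. Because $L^1$ is non-reflexive and the integrand $v\Phi(v_\xi)$ grows only linearly in $v_\xi$, the uniform energy bound yields $L^1$-boundedness of $\{v_\xi(\cdot,t_n)\}$ but not the equi-integrability that weak $L^1$ convergence of the derivatives requires, so a priori the limiting gradient measure could carry a singular part. I would close this gap by exploiting the \emph{convergence of energies} $\mathcal{E}(v(\cdot,t_n))\to\mathcal{E}(v_{\min})$ together with the $C^1$-regularity of $v_{\min}$ established in Lemma~\ref{Lem-1}: a Reshetnyak-type continuity argument for the area-type functional $\int_0^L v\,\Phi(v_\xi)\,d\xi$ rules out concentration of the derivatives, forcing the singular part of the limit measure to vanish and making $\{v_\xi(\cdot,t_n)\}$ equi-integrable. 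Dunford--Pettis then gives $v_\xi(\cdot,t_n)\rightharpoonup v_{\min,\xi}$ weakly in $L^1$, which is precisely \eqref{cc-8}; a secondary point to pin down carefully is the uniqueness (up to translation) of the minimiser needed to identify the limit.
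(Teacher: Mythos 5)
Your quantitative decay estimate is correct and is essentially the paper's own argument. The paper derives \eqref{tt-5-01} from the chain of estimates in Lemma~\ref{L-ex} by squaring $K_1\,\mathcal{E}(v\,|\,v_{\min})\leqslant K_2 D^{1/2}+\bigl(M(r_0^{-1}-A\bar{M}^{-m/2})-K_1\mathcal{E}(v_{\min})\bigr)_+$ (Young's inequality producing the factor $\tfrac12$ in $K_3$), and then runs the ODE comparison \eqref{tt-6-01}--\eqref{tt-7-02}, where the hypothesis $\mathcal{E}(v_{\min})\geqslant\mathcal{E}^*$ forces $\tilde K_4=0$, i.e. $\beta=0$. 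You invoke the same hypothesis \emph{before} squaring, so the constant term never appears; this is the same mechanism and yields the same $B_0$ and (up to a harmless factor of $2$) the same $B_1$, together with the same dissipation identity \eqref{tt-3} converting the production inequality into $w'\leqslant -c\,w^2$. This half of your proposal matches the paper in substance.

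For the convergence \eqref{cc-8} you genuinely part ways with the paper, which disposes of it in one sentence: the bound \eqref{cc-7} and $v\geqslant r_0$ give $\|v\|_{W_1^1(0,L)}\leqslant\bar C$, ``and the conclusion \eqref{cc-8} holds.'' Your compactness and lower-semicontinuity route is a real attempt to fill this in, but it contains a genuine gap, which you partly flag yourself: the identification $v_\infty=v_{\min}$ is justified ``by uniqueness of the minimiser from Lemma~\ref{Lem-1}'', and Lemma~\ref{Lem-1} contains no uniqueness statement. Uniqueness in fact fails: the problem is translation invariant on the periodic domain, and minimizers need not be single humps --- the paper's own numerics (Figure~\ref{fig:lambda_noGravity}) exhibit a two-hump minimizer with piecewise-constant $\lambda(\xi)$ at the same mass. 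So your argument proves at best that every subsequential limit is \emph{some} minimizer at the energy level $\mathcal{E}(v_{\min})$; upgrading this to convergence of the whole family to the prescribed $v_{\min}$ requires an additional ingredient (e.g. a Lojasiewicz--Simon-type argument or a classification of the $\omega$-limit set) that you do not supply. The Reshetnyak/Dunford--Pettis step against concentration of $\{v_\xi(\cdot,t_n)\}$ is likewise only sketched, and it is needed even to place the limit in $W_1^1$ rather than $BV$, since lower semicontinuity alone only controls the relaxed functional. In fairness, the paper's one-line justification suffers from exactly the same defects --- $W_1^1$-boundedness by itself gives neither weak $W_1^1$ compactness nor identification of the limit --- so your proposal is a more honest account of what is actually required, but as written it does not close the gap either.
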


\begin{proof}[Proof of Theorem~\ref{Th-conv}]

Let us denote
$$
\mathcal{E}(v | v_{\min}) := \mathcal{E}(v) -\mathcal{E}(v_{\min}).
$$
Similar to (\ref{tt-5-00}), we deduce that
\begin{equation}\label{tt-5-01}
K_3 \mathcal{E}^2(v | v_{\min}) \leqslant \int \limits_{0}^{L} { Q(v) J_{\xi}^2(v) d\xi} + \tilde{K}_4 ,
\end{equation}
where
$$
K_3 = \tfrac{1}{2} ( \tfrac{K_1}{K_2})^2, \ \ \tilde{K}_4 =
 \tfrac{1}{K_2^2} \bigl( M(r_0^{-1} - A\, \bar{M}^{-\frac{m}{2}} ) - K_1  \mathcal{E}(v_{\min}) \bigr)^2_+ .
$$
Then by (\ref{tt-5-01}), similar to (\ref{tt-6}),  we arrive at
\begin{equation}\label{tt-6-01}
 \tfrac{d}{dt} \mathcal{E}(v | v_{\min}) + \sigma^{-1} K_3 \mathcal{E}^2(v | v_{\min}) \leqslant  \sigma^{-1} \tilde{K}_4 .
\end{equation}
Compare \eqref{tt-6-01} with a solution of the following problem
\begin{equation}\label{tt-7-01}
y'(t) \leqslant    \alpha \bigl( \beta^2- y^2(t) \bigr)
\text{ with } y(0) =y_0 ,
\end{equation}
where $ y(t) :=   \mathcal{E}(v | v_{\min}) \geqslant 0$,
$$
\alpha = \sigma^{-1} K_3 , \ \beta^2 = \tfrac{\tilde{K}_4}{ K_3}.
$$
As $\beta = 0$ provided $ \mathcal{E}(v_{\min}) \geqslant \mathcal{E}^*$
then from (\ref{tt-7-01}) we have
\begin{equation}\label{tt-7-02}
y'(t) \leqslant   - \alpha   y^2(t) \text{ with } y(0) =y_0 .
\end{equation}
Solving (\ref{tt-7-02}), we deduce that
$$
y(t) \leqslant \frac{y_0}{1 + \alpha y_0   \, t}   .
$$
As a result,   we arrive at
\begin{equation}\label{cc-7}
0 \leqslant \mathcal{E}(v | v_{\min}) \leqslant \frac{B_0}{1 + B_1\, t}
\to 0 \text{ as } t \to +\infty
\end{equation}
where
$$
B_0 =  \mathcal{E}(v_0 | v_{\min}) , \quad \ B_1 = \sigma^{-1} \mathcal{E}(v_0 | v_{\min}) K_3 ,
$$
provided
$$
0 \leqslant A < r_0^{m-1}
\text{ and } \mathcal{E}(v_{\min}) \geqslant \mathcal{E}^*.
$$
Since the energy $\mathcal{E}(v)$ is bounded by (\ref{cc-7}) and $v\geqslant r_0 > 0$, it follows that $\| v \|_{W_1^1(0,L)} \leqslant \bar{C}$, where $\bar{C}$ is a constant, and the conclusion
\eqref{cc-8} holds.
\end{proof}

\subsection{Convergence to a travelling wave with gravity}
\label{sec:convergence_with_gravity}
The case with the gravitational term $(\mu = 1)$ is more complicated and will require us to introduce and additional conditions 
to compare to the case without gravity. 
Let us denote by
\begin{equation}
F(\xi,t) :=   - \sigma \int \limits_0^{\xi} { (1 - \tfrac{V}{2} \tfrac{v^2(y)}{Q(v(y))} + \tfrac{\nu}{Q(v(y))})\, dy }  \ \ \forall\,\nu \in \mathbb{R}^1,
\label{eqn:F_def}
\end{equation}
\begin{equation}
\tilde{\mathcal{E}}(v(t)) := \int \limits_0^{L} { \{  v \Phi(v_{\xi}) + \tfrac{A}{m-2}v^{2-m}
+ \tfrac{1}{2} v^2  F (\xi,t) \} \, d\xi },
\label{eqn:modified_E}
\end{equation}
where $\tilde{\mathcal{E}}(v)$ is a modified energy functional that incorporates surface energy, the stabilization mechanism, and a gravitational potential energy represented by $\int_0^L \tfrac{1}{2}v^2 F(\xi,t)~d\xi$.

\begin{lemma}\label{L-ex-gr}
Let $m > 2$ and $\mu =1$. Assume that
\begin{equation}\label{ss-11-00}
F(0,t) = F (L,t)  \Leftrightarrow \int \limits_0^{L} {(\tfrac{V}{2} \tfrac{v^2(y)}{Q(v(y))} - \tfrac{\nu}{Q(v(y))} ) \, d\xi } = L,
\end{equation}
and
\begin{equation}\label{ss-11}
\int \limits_0^{L} {   v^2  F_t(\xi,t) \, d\xi } \leqslant 0 .
\end{equation}
Then, for every $M$, the functional
$\tilde{\mathcal{E}}$ attains its minimum $v_{\min}$ on  $\mathcal{M}$.
\end{lemma}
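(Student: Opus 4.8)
The plan is to mirror the proof of Lemma~\ref{L-ex}: I would show that the modified energy $\tilde{\mathcal{E}}$ is non-increasing along solutions of (\ref{tt-1}) with $\mu=1$ and is bounded from below on $\mathcal{M}$, and then conclude that it attains its minimum. The two standing hypotheses enter precisely in the dissipation step, and the only genuinely new difficulty compared with the gravity-free case is the control of the nonlocal gravitational potential term $\tfrac12\int_0^L v^2 F\,d\xi$.

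The heart of the argument is the following algebraic observation, which explains the particular form of (\ref{eqn:F_def}). Differentiating $F$ in $\xi$ and multiplying by $\sigma^{-1}Q(v)$ one finds
\[
\sigma^{-1}\bigl(Q(v)F_\xi\bigr)_\xi = -\bigl(Q(v)-\tfrac{V}{2}v^2\bigr)_\xi ,
\]
so that $F$ is engineered exactly to absorb the gravitational-advective flux. Consequently the PDE (\ref{tt-1}) with $\mu=1$ can be rewritten as $v\,v_t = -\sigma^{-1}\bigl(Q(v)(J-F)_\xi\bigr)_\xi$. I would then differentiate (\ref{eqn:modified_E}) in $t$, using the identity $\tfrac{d}{dt}\mathcal{E}(v)=-\int_0^L v v_t J\,d\xi$ that underlies (\ref{tt-2}) (it follows from $\Phi(z)-z\Phi'(z)=f(z)$ together with integration by parts) and the relation $\tfrac12(v^2)_t=vv_t$, to obtain
\[
\tfrac{d}{dt}\tilde{\mathcal{E}}(v) = -\int_0^L v v_t (J-F)\,d\xi + \tfrac12\int_0^L v^2 F_t\,d\xi .
\]
Substituting the rewritten equation and integrating by parts, the periodicity of $J-F$ — guaranteed by hypothesis (\ref{ss-11-00}) — removes all boundary terms and yields
\[
\tfrac{d}{dt}\tilde{\mathcal{E}}(v) = -\sigma^{-1}\int_0^L Q(v)(J-F)_\xi^2\,d\xi + \tfrac12\int_0^L v^2 F_t\,d\xi \leqslant 0 ,
\]
the first term being non-positive since $Q(v)\geqslant 0$ for $v\geqslant r_0$, and the second non-positive by hypothesis (\ref{ss-11}). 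This is the exact analogue of (\ref{tt-3}) and shows that $\tilde{\mathcal{E}}(v(t))$ dissipates.

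For the lower bound I would treat the three contributions to (\ref{eqn:modified_E}) separately. As in Lemma~\ref{L-ex}, the estimates $\int_0^L v\Phi(v_\xi)\,d\xi\geqslant r_0 L$ (since $\Phi\geqslant 1$ and $v\geqslant r_0$) and $\int_0^L v^{2-m}\,d\xi\geqslant L\,\bar{M}^{-\frac{m-2}{2}}$ (Jensen's inequality applied to the convex map $s\mapsto s^{(2-m)/2}$ under the mass constraint) bound the surface-tension and stabilization terms below by a positive constant. It then remains to bound $\tfrac12\int_0^L v^2 F\,d\xi$ from below uniformly on $\mathcal{M}$, after which the argument concludes exactly as in Lemma~\ref{L-ex}: being non-increasing along the flow and bounded below, $\tilde{\mathcal{E}}$ has a finite infimum, and by weak lower semicontinuity of the surface and stabilization terms together with the $W_1^1$–bound extracted from the energy control, a minimizing sequence in $\mathcal{M}$ admits a weakly convergent subsequence whose limit $v_{\min}\in\mathcal{M}$ realizes the minimum.

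The step I expect to be the main obstacle is precisely this lower bound on the nonlocal term $\tfrac12\int_0^L v^2 F\,d\xi$. Unlike the purely local energy of Lemma~\ref{L-ex}, $F$ depends on $v$ through the integral of $1/Q(v)$, which is singular where $v$ approaches $r_0$ (recall $Q(v)\sim c\,(v-r_0)$ there), so a naive pointwise estimate is unavailable; establishing coercivity will require exploiting the closure condition (\ref{ss-11-00}) and the sign structure of the gravitational potential rather than bounding $F$ directly.
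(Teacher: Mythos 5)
Your dissipation step is exactly the paper's argument: rewriting \eqref{tt-1} with $\mu=1$ as $v\,v_t=-\sigma^{-1}\bigl(Q(v)(J-F)_\xi\bigr)_\xi$ and testing with $J-F$ is the same computation as the paper's ``multiply by $-(J-F(\xi,t))$ and integrate,'' and you land precisely on the paper's identity (\ref{ss-10}), with (\ref{ss-11-00}) removing the boundary terms and (\ref{ss-11}) giving $\tfrac{d}{dt}\tilde{\mathcal{E}}\leqslant 0$; your first two lower bounds also coincide with the paper's. The genuine gap is the step you flag and leave open: you never bound $\tfrac12\int_0^L v^2F\,d\xi$ from below, so you have not shown that $\tilde{\mathcal{E}}$ is bounded below on $\mathcal{M}$, and the ``non-increasing $+$ bounded below $\Rightarrow$ attains its minimum'' conclusion cannot be drawn. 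That is half of the lemma, not a technical footnote.

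Moreover, your diagnosis of why this step is hard is off: no two-sided control of $F$ (and hence of $1/Q$) is needed, only a one-sided bound, and the singular part of $F$ enters with a favourable sign. Writing \eqref{eqn:F_def} as
\begin{equation*}
F(\xi,t)=-\sigma\xi+\sigma\int_0^\xi \frac{\tfrac{V}{2}v^2(y)-\nu}{Q(v(y))}\,dy,
\end{equation*}
the integrand is non-negative whenever $\tfrac{V}{2}v^2\geqslant\nu$ pointwise (e.g.\ $\nu\leqslant\tfrac{V}{2}r_0^2$, since $v\geqslant r_0$ on $\mathcal{M}$); the blow-up of $1/Q$ near $v=r_0$ only pushes $F$ \emph{up}, not down. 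Hence $F\geqslant-\sigma\xi\geqslant-\sigma L$ pointwise, so
\begin{equation*}
\int_0^L v^2F\,d\xi\geqslant-\sigma L\int_0^L v^2\,d\xi=-\sigma LM,
\end{equation*}
which is exactly the third estimate the paper invokes, yielding
$\tilde{\mathcal{E}}(v)\geqslant\tilde{K}_0:=\bigl(r_0+\tfrac{A}{m-2}\bar{M}^{-\frac{m-2}{2}}-\tfrac12\sigma M\bigr)L$.
Note that a finite (possibly negative) lower bound is all that is needed here; no coercivity in your sense is required. With this one-line estimate supplied, your argument closes and coincides with the paper's proof.
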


\begin{proof}[Proof of Lemma~\ref{L-ex-gr}]

Multiplying (\ref{tt-1}) with $\mu =1$ by $- (J - F(\xi,t))$ and integrating on $\xi$, we deduce that
\begin{equation}\label{ss-10}
\tfrac{d}{dt} \tilde{\mathcal{E}}(v(t)) - \tfrac{1}{2} \int \limits_0^{L} {   v^2  F_t(\xi,t) \, d\xi } +
\sigma^{-1}\int \limits_0^{L} { Q(v)( J(v) - F(\xi,t))^2_{\xi} \, d\xi }  =0.
\end{equation}
Then from (\ref{ss-10}), due to (\ref{ss-11}), we get
\begin{equation}\label{ss-12}
\tfrac{d}{dt} \tilde{\mathcal{E}}(v(t)) \leqslant 0.
\end{equation}
Moreover, taking into account
$$
  \int \limits_{0}^{L} { v \Phi (v_{\xi})  d\xi} \geqslant r_0 L,\ \
\int \limits_{0}^{L} {  v^{2-m}  d\xi} \geqslant L\,\bar{M}^{- \frac{m-2}{2}}, \ \
\int \limits_0^{L} { \  v^2  F (\xi,t)  \, d\xi } \geqslant   - \sigma L  M,
$$
we deduce that
$$
\tilde{\mathcal{E}}(v) \geqslant  \tilde{K}_0 := ( r_0  + \tfrac{A}{m -2}\bar{M}^{- \frac{m-2}{2}}    - \tfrac{1}{2} \sigma M ) L.
$$
As the functional $\tilde{\mathcal{E}}(v)$ is non-increasing and bounded from bottom then
it attains its minimum $v_{\min}$ on the set $\mathcal{M}$.
\end{proof}

The Euler-Lagrange equation for $\tilde{\mathcal{E}}(v)$
under the  constraint $ \int \limits_0^L {   v^2   d\xi} = M$
is given by
\begin{equation}\label{rt-3-ss}
(\Phi'(v' ))'  - v^{-1}f(v' ) + A v ^{-m} = F_{\infty}(\xi) + \lambda  ,
\end{equation}
where $\lambda$ is a Lagrange multiplier and
$$
F_{\infty}(\xi) :=  \mathop {\lim} \limits_{t \to +\infty} F(\xi,t).
$$
 We also need to incorporate the
constraint $v \geqslant r_0 > 0$. If $v \in \mathcal{M}$, we decompose $(0,L)$ according to the value of $v$ into the
set $P(v)$ and the  set $Z(v)$. As equation (\ref{rt-3-ss}) is not autonomous then $meas (Z(v)) =0$.

\begin{lemma}\label{Lem-1-stw}
Let $m > 2$, $\mu =1$, and $0 \leqslant A < r_0^{m-1}$. The functional $\tilde{\mathcal{E}}$ has the minimizer $v_{\min}$  on $\mathcal{M}$
such that $v_{\min} \in C^1[0,L]$, $v_{\min}(0) = v_{\min}(L) $ and $v'_{\min}(0) = v'_{\min}(L)=0$, and
solves (\ref{rt-3-ss}) with
$$
 A r_0^{ -m} - r_0^{-1} \leqslant \lambda  = - \tfrac{2}{r_0^2}(r_0 - C_0 + \tfrac{A}{m-2}r_0^{2-m}) < 0,
$$
where
$$
\tfrac{1}{2}(r_0 + \tfrac{A\,m}{m-2}r_0^{2-m}) \leqslant  C_0 < r_0 + \tfrac{A}{m-2}r_0^{2-m},
$$
provided
$$
V = 2 \frac{L \int \limits_0^{L} {\tfrac{v_{\min}^2 d\xi}{Q(v_{\min})}   } - M \int \limits_0^{L} { \tfrac{ d\xi}{Q(v_{\min})} }  }{ (\int \limits_0^{L} { \tfrac{v_{\min}^2 d\xi}{Q(v_{\min})}   })^2 - ( \int \limits_0^{L} { \tfrac{ d\xi}{Q(v_{\min})} } ) ( \int \limits_0^{L} { \tfrac{v_{\min}^4 d\xi}{Q(v_{\min})}  } ) },
$$
$$
\nu =   \frac{L \int \limits_0^{L} {\tfrac{v_{\min}^4 d\xi}{Q(v_{\min})}   } - M \int \limits_0^{L} { \tfrac{ v_{\min}^2 d\xi}{Q(v_{\min})} }  }{ (\int \limits_0^{L} { \tfrac{v_{\min}^2 d\xi}{Q(v_{\min})}   })^2 - ( \int \limits_0^{L} { \tfrac{ d\xi}{Q(v_{\min})} } ) ( \int \limits_0^{L} { \tfrac{v_{\min}^4 d\xi}{Q(v_{\min})}  } ) } .
$$
\end{lemma}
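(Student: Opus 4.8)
The plan is to mirror the argument of Lemma~\ref{Lem-1}, working with the modified functional $\tilde{\mathcal{E}}$ from \eqref{eqn:modified_E} in place of $\mathcal{E}$, and then to pin down the constants $V$ and $\nu$ from two scalar identities. Existence of a minimizer $v_{\min}\in\mathcal{M}$ is already furnished by Lemma~\ref{L-ex-gr}. Computing the first variation of $\mathcal{L}(v)=\tilde{\mathcal{E}}(v)+\tfrac{\lambda}{2}\int_0^L v^2\,d\xi$ about $v_{\min}$ against test functions supported in $P(v_{\min})$, exactly as in subsection~\ref{sec:minimizers}, produces the Euler--Lagrange equation \eqref{rt-3-ss}, that is $J(v_{\min})=F_\infty(\xi)+\lambda$; since \eqref{rt-3-ss} is non-autonomous we have $\mathrm{meas}(Z(v_{\min}))=0$, so this holds a.e. on $(0,L)$. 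Differentiating and multiplying by $\sigma^{-1}Q(v_{\min})$ shows it is equivalent to the once-integrated travelling-wave profile equation
$$\sigma^{-1}Q(v_{\min})J_\xi(v_{\min})+Q(v_{\min})-\tfrac{V}{2}v_{\min}^2=-\nu,$$
so that $J(v_{\min})-F_\infty\equiv\lambda$ is exactly the statement that $v_{\min}$ is a steady profile of \eqref{tt-1}.

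To fix $V$ and $\nu$, I divide the displayed first integral by $Q(v_{\min})$ and integrate over $(0,L)$. Writing $I_0=\int_0^L Q(v_{\min})^{-1}\,d\xi$, $I_2=\int_0^L v_{\min}^2 Q(v_{\min})^{-1}\,d\xi$ and $I_4=\int_0^L v_{\min}^4 Q(v_{\min})^{-1}\,d\xi$, and using that $J(v_{\min})$ is $L$-periodic so that $\int_0^L J_\xi\,d\xi=0$, the weight $1$ gives $\tfrac{V}{2}I_2-\nu I_0=L$, which is precisely \eqref{ss-11-00}. Multiplying instead by $v_{\min}^2$ and integrating gives $\tfrac{V}{2}I_4-\nu I_2=M$, where I used $\int_0^L v_{\min}^2 J_\xi\,d\xi=-\int_0^L (v_{\min}^2)_\xi J\,d\xi=0$ after an integration by parts, the last equality being the translation-invariance identity $\int_0^L (v^2)_\xi J\,d\xi=0$ already exploited in Lemma~\ref{L-ex}. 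Solving this $2\times2$ linear system for $(V,\nu)$ by Cramer's rule yields exactly the two quotients stated in the lemma, with common denominator $I_2^2-I_0 I_4$.

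It remains to recover the profile structure and the bounds on $\lambda$ and $C_0$. Setting $v_{\min}'=z(v)$ and repeating the manipulation behind \eqref{t-2}, but retaining the $\xi$-dependent right-hand side, a direct computation gives
$$\tfrac{d}{d\xi}\bigl[v_{\min}f(v_{\min}')\bigr]=A\,v_{\min}^{1-m}v_{\min}'-v_{\min}v_{\min}'\bigl(F_\infty+\lambda\bigr).$$
Integrating from a touch point $\xi_*$ at which $v_{\min}(\xi_*)=r_0$ and $v_{\min}'(\xi_*)=0$ (so that $f(0)=1$), the gravitational contribution $-\tfrac12\int_{\xi_*}^\xi F_\infty\,(v_{\min}^2)_\xi\,d\xi$ vanishes at $\xi_*$; hence at $\xi_*$ one recovers the same algebraic relation between $\lambda$ and $C_0$ as in the autonomous integral \eqref{t-2-0}, which is precisely \eqref{eq:A>0lambda} together with the stated inequalities for $C_0$. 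As in Lemma~\ref{Lem-1}, elliptic regularity for \eqref{rt-3-ss} gives $v_{\min}\in C^1[0,L]$, and, after a translation placing the touch point at the endpoints, $v_{\min}(0)=v_{\min}(L)$ and $v_{\min}'(0)=v_{\min}'(L)=0$.

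The main obstacle is the self-consistency of the construction: $V$, $\nu$, and therefore $F_\infty$ through \eqref{eqn:F_def}, are defined in terms of $v_{\min}$, while $v_{\min}$ itself minimizes a functional built from $F_\infty$. One must verify that the pair $(V,\nu)$ produced in the second step indeed satisfies the hypotheses \eqref{ss-11-00} and \eqref{ss-11} required by Lemma~\ref{L-ex-gr}, thereby closing the fixed-point loop; the first of these is automatic from the weight-$1$ identity above, while the monotonicity \eqref{ss-11} must be checked along the flow. A secondary difficulty is that \eqref{rt-3-ss} is genuinely non-autonomous, so the global phase-plane construction of Lemma~\ref{Lem-1} does not transfer verbatim; only the local relation at the touch point survives, and it is exactly this that the $\lambda$--$C_0$ bounds encode.
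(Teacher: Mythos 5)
Your proposal is correct and follows essentially the same route as the paper: existence of $v_{\min}$ from Lemma~\ref{L-ex-gr}, the Euler--Lagrange equation \eqref{rt-3-ss} from the first variation, two periodicity constraints forming a $2\times 2$ linear system whose solution by Cramer's rule gives the stated $(V,\nu)$, and the touch-down condition $v=r_0$, $v'=0$ yielding the $\lambda$--$C_0$ relation \eqref{ss-9}. The only cosmetic difference is that you obtain the second constraint (the paper's \eqref{ss-6}) by integrating the profile equation against $v^2/Q$ and invoking the translation-invariance identity $\int_0^L (v^2)_\xi J\,d\xi=0$, whereas the paper imposes periodicity $b(0)=b(L)$ of the coefficient in its first-integral ansatz \eqref{ss-5}; these are the same equation, and your closing remarks on the $F_\infty$ self-consistency match the level of rigor the paper itself adopts.
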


\begin{proof}[Proof of Lemma~\ref{Lem-1-stw}]

Note that equation (\ref{rt-3-ss}) has a periodic solution provided
\begin{equation}\label{ss-3}
F_{\infty}(0) = F_{\infty}(L) =0  \Leftrightarrow \int \limits_0^{L} {(\tfrac{V}{2} \tfrac{v_{\min}^2}{Q(v_{\min})} - \tfrac{\nu}{Q(v_{\min})} ) \, d\xi } = L.
\end{equation}
Multiplying (\ref{rt-3-ss}) by $-v\,v_{\xi} $ $(v_{\xi} \neq 0)$, we obtain that
\begin{equation}\label{ss-4}
\bigl( v   f(v_{\xi}) +   \tfrac{A}{m-2} v^{2-m} \bigr)_{\xi}    = - ( F_{\infty}(\xi) + \lambda)  v v_{\xi}.
\end{equation}
We will look for the first integral to (\ref{ss-4}) in the form:
\begin{equation}\label{ss-5}
 f(v_{\xi}) = a(\xi) v + b(\xi) v^{-1} - \tfrac{A}{m-2} v^{1-m},
\end{equation}
where $a(0)=a(L)$ and $b(0) = b(L)$. Substituting (\ref{ss-5}) into (\ref{ss-4}), we find that
$$
a(\xi) = - \tfrac{1}{2} ( F_{\infty}(\xi) + \lambda)  , \ \ b(\xi) =  C_0 + \tfrac{1}{2} \int \limits_0^{\xi}{ F'_{\infty}(y) v^2(y)\,dy}
 \ \   \forall\, C_0 \in \mathbb{R}^1 .
$$
By $b(0) = b(L)  $   we have
\begin{equation}\label{ss-6}
 \int \limits_0^{L} {(\tfrac{V}{2} \tfrac{v_{\min}^4 }{Q(v_{\min})} - \tfrac{\nu v_{\min}^2}{Q(v_{\min})} ) \, d\xi } = M.
\end{equation}
In particular, from (\ref{ss-3}) and (\ref{ss-6}) it follows that
$$
V = 2 \frac{L \int \limits_0^{L} {\tfrac{v_{\min}^2 d\xi}{Q(v_{\min})}   } - M \int \limits_0^{L} { \tfrac{ d\xi}{Q(v_{\min})} }  }{ (\int \limits_0^{L} { \tfrac{v_{\min}^2 d\xi}{Q(v_{\min})}   })^2 - ( \int \limits_0^{L} { \tfrac{ d\xi}{Q(v_{\min})} } ) ( \int \limits_0^{L} { \tfrac{v_{\min}^4 d\xi}{Q(v_{\min})}  } ) },
$$
$$
\nu =   \frac{L \int \limits_0^{L} {\tfrac{v_{\min}^4 d\xi}{Q(v_{\min})}   } - M \int \limits_0^{L} { \tfrac{ v_{\min}^2 d\xi}{Q(v_{\min})} }  }{ (\int \limits_0^{L} { \tfrac{v_{\min}^2 d\xi}{Q(v_{\min})}   })^2 - ( \int \limits_0^{L} { \tfrac{ d\xi}{Q(v_{\min})} } ) ( \int \limits_0^{L} { \tfrac{v_{\min}^4 d\xi}{Q(v_{\min})}  } ) } .
$$
As a result, by (\ref{ss-5}) we arrive at
\begin{equation}\label{ss-8}
  v^2_{\xi} = \frac{1 - ( a(\xi) v + b(\xi) v^{-1} - \frac{A}{m-2} v^{1-m})^2 }{(a(\xi) v + b(\xi) v^{-1} - \frac{A}{m-2} v^{1-m})^2}.
\end{equation}
If $v(0) = v(L) = r_0$ then $v'(0) = v'(L) = 0$ provided
\begin{equation}\label{ss-9}
  \lambda = - \tfrac{2}{r_0^2} (r_0 - C_0 + \tfrac{A}{m-2} r_0^{2-m} ).
\end{equation}

\end{proof}

\section{Numerical studies}
\label{sec:numerics}
To simulate the fibre coating dynamics and explore beyond the analytical results presented in previous sections, we numerically investigate the problem (\ref{r-1})---(\ref{r-3}). Specifically, we are interested in the structure of energy minimizers and the travelling wave solutions governed by the PDE \eqref{tt-1}.

\begin{figure}
    \centering
    \includegraphics[width = 0.48\textwidth]{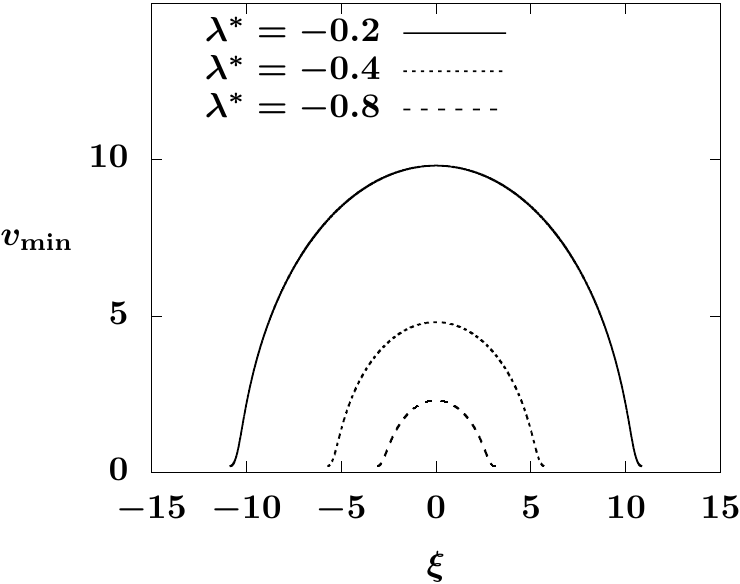}
     \includegraphics[width = 0.48\textwidth]{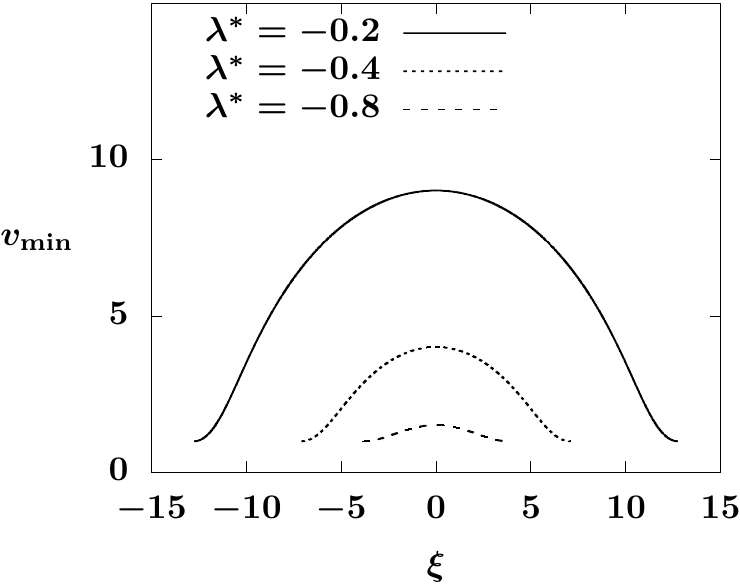}
    \caption{Typical profiles of the energy minimizer $v_{\min}(\xi)$ obtained by numerically solving the ODE \eqref{rt-3} for $A = 0$ and $\lambda = \lambda^*$ for a range of $\lambda^*$ values with (left) $r_0 = 0.2$ and (right) $r_0 = 1$. The minimum and maximum of the profiles are consistent with  \eqref{eq:A=0v1v2} where $C_0 = r_0 + \lambda^* r_0^2/2$ from \eqref{eq:lambstar}. The periods of the solutions are given by \eqref{t-tau}.}
    \label{fig:v_lambdaVarying}
\end{figure}

Firstly, we investigate the energy minimizers $v_{\min}(\xi)$ discussed in Lemma \ref{Lem-1} and their structures. For $\lambda \equiv \lambda^*$ and $A = 0$,  the profile of $v_{\min}(\xi)$ has a unique maximum $\max_{\xi}(v_{\min}) = v_1$ and  minimum $\min_{\xi}(v_{\min}) = v_2$, where $\lambda^*$, $v_1$ and $v_2$ are defined in Lemma \ref{Lem-1} by \eqref{eq:lambstar} and \eqref{eq:A=0v1v2}, respectively. For given values of $\lambda^*$ and $r_0$, the corresponding value of $C_0 = r_0 + \lambda^* r_0^2 / 2$ is obtained based on \eqref{eq:lambstar}, and the period  $\tau$  of the minimizer is obtained from \eqref{t-tau}. Then we numerically solve the ODE \eqref{rt-3} for the solution profile $v_{\min}(\xi)$ on $ -\tau/2 \leqslant \xi \leqslant \tau/2$ subject to the Neumann boundary conditions $v'(\xi) = 0$ at $\xi = \pm \tau/2$. 
When other system parameters are fixed, 
a larger absolute value of $\lambda^*$, $|\lambda^*| = -\lambda^*$, leads to an energy minimizer $v_{\min}(\xi)$ with a smaller magnitude and a smaller period (see Figure \ref{fig:v_lambdaVarying}). The comparison between the two plots with the fibre radius $r_0 = 0.2$ and $r_0 = 1$ in Fig.~\ref{fig:v_lambdaVarying} also shows that with identical $\lambda^*$ values, a larger value of $r_0$ leads to a $v_{\min}(\xi)$ profile with a larger period and smaller spatial variations. This implies that with identical dynamic pressure, droplets on a thick fibre tend to have a smaller bead size than those on a thinner fibre.

\begin{figure}
    \centering
     \includegraphics[width = 0.49\textwidth]{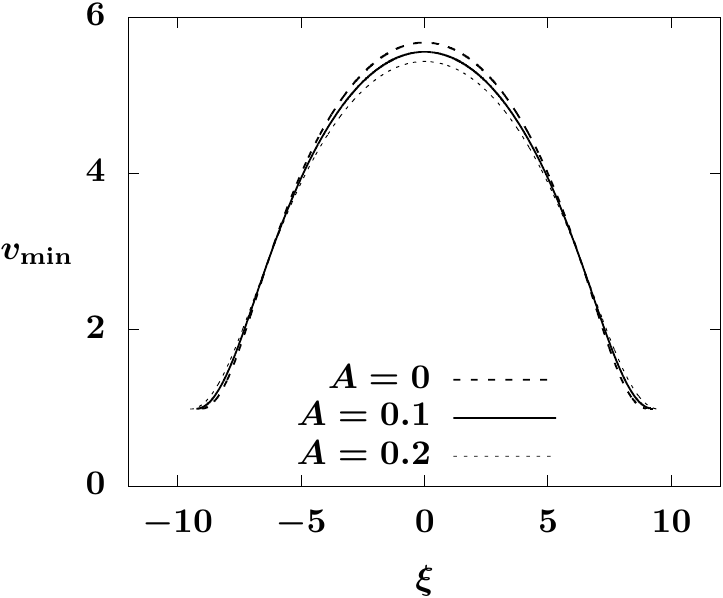}
          \includegraphics[width = 0.49\textwidth]{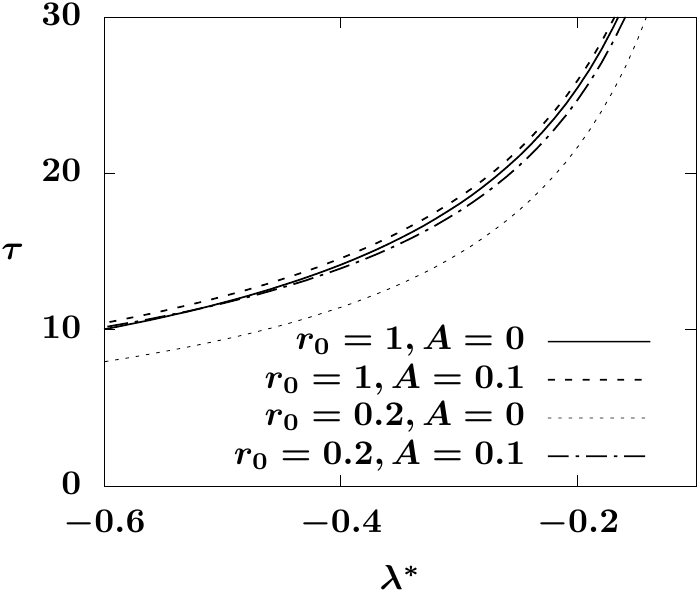}
    \caption{(Left) Profiles of the energy minimizer $v_{\min}(\xi)$ satisfying \eqref{rt-3} for $A = 0, 0.1, 0.2$ with fixed $r_0=1$ and $\lambda=\lambda^* = -0.3$, showing that an increasing $A$ leads to smaller spatial variations. (Right) The period $\tau$ of energy minimizers given by \eqref{t-tau-2} for a varying $\lambda = \lambda^*$ satisfying \eqref{eq:A>0lambda} with different values of $A$ and $r_0$. The parameter $m = 3$ is used in both figures.}
    \label{fig:v_AVarying}
\end{figure}
Figure~\ref{fig:v_AVarying} (left) presents the effects of the stabilization term $A/u^m$, showing that with other system parameters fixed, a positive $A>0$ yields smaller spatial variations in $v_{\min}$. This is also consistent with the observation drawn in \cite{Bert2019}. 
Similar to the $A = 0$ case, the energy minimizers $v_{\min}(\xi)$ for $A > 0$ presented in Fig.~\ref{fig:v_AVarying} (left) are numerically obtained by solving the ODE \eqref{rt-3} for $-\tau/2 \leqslant \xi \leqslant \tau/2$, where the period $\tau$ is given by \eqref{t-tau-2} and the constant $C_0$ is obtained from  \eqref{eq:A>0lambda} with the fibre radius $r_0 = 1$, $\lambda=\lambda^* = -0.3$, and $m = 3$.  With different values of $A$ and $r_0$, Fig.~\ref{fig:v_AVarying} (right) shows a comparison of the period $\tau$ of energy minimizers based on \eqref{t-tau-2} for a varying $\lambda = \lambda^*$ that satisfies \eqref{eq:A>0lambda}. This plot shows that when other system parameters are fixed, larger values of $r_0$ and $A$  typically lead to a larger period $\tau$ for the energy minimizer.

Next we explore the dynamics of the problem (\ref{r-1})---(\ref{r-3}) without gravity effects, that is, without the last term $[Q(u)]_x$. Typical dynamic simulation results of \eqref{r-1} without gravity modulation subject to periodic boundary conditions are plotted in Fig.~\ref{fig:dynamics_noGravity}. The initial data for this simulation is given by $u_0(x) = v(x)$ which satisfies the ODE \eqref{rt-3} with $C_0 = 0.5$, $\lambda = -0.5$ over $0 \leqslant x \leqslant L$ for the period $L = \tau = 10.79$ obtained from \eqref{t-tau}. Other system parameters are $A = 0$, $r_0 = 0.2$, $\sigma = 0.01$, and $M = \int_0^L u^2~dx = 68.6$. We note that the parameter pair $(\lambda, C_0)$ do not satisfy the condition \eqref{eq:A>0lambda}, which implies that the initial profile  $u_0(x)$ is not an energy minimizer of the system. Starting from the initial profile, 
the dynamic solution quickly evolves into a two-hump solution with an additional local maximum at the edge $x = 0$ and $x=L$. Moreover, a local singularity, $u = r_0$, is approached at two global minima near x = $0.9$ and x = $9.9$ where the two hump solutions are connected. An inset plot is also included in Fig.~\ref{fig:dynamics_noGravity} (left) to  display the dynamics near the region where the two humps are connected.
The history of the energy $\mathcal{E}(t)$ in Fig.~\ref{fig:dynamics_noGravity} (right) shows that the energy is dissipating as the solution approaches to the two-hump profile. This confirms the conclusion in Theorem \ref{Th-conv}.
These numerical studies are conducted using fully implicit finite differences with a uniformly-spaced grid, and the numerical scheme is second-order accurate in space and first-order accurate in time. 
In our numerical studies, we also observed cases when a single-hump initial profile can evolve into three or more humps. We suspect that linear stability analysis for steady states of the governing PDE can lead to an estimate of the number of humps and the structure of long-time dynamics. However, a systematic investigation of this problem is beyond the scope of this paper and we may further study this aspect of the problem in the future.
\begin{figure}
    \centering
     \includegraphics[width = 0.49\textwidth]{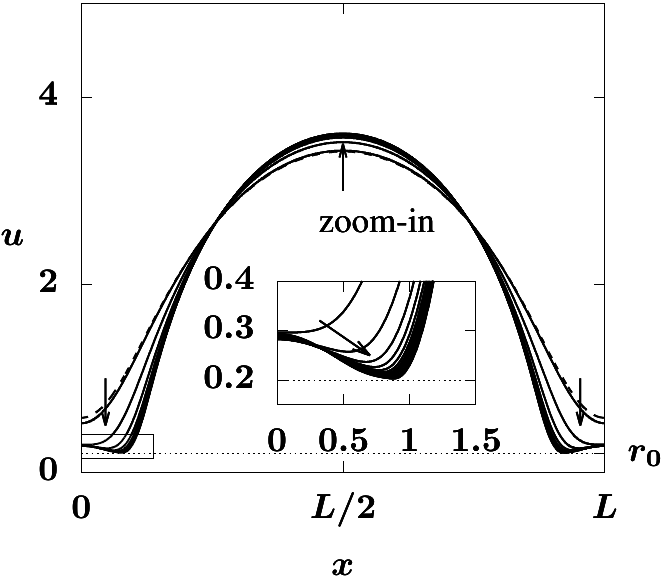}
    \includegraphics[width = 0.49\textwidth]{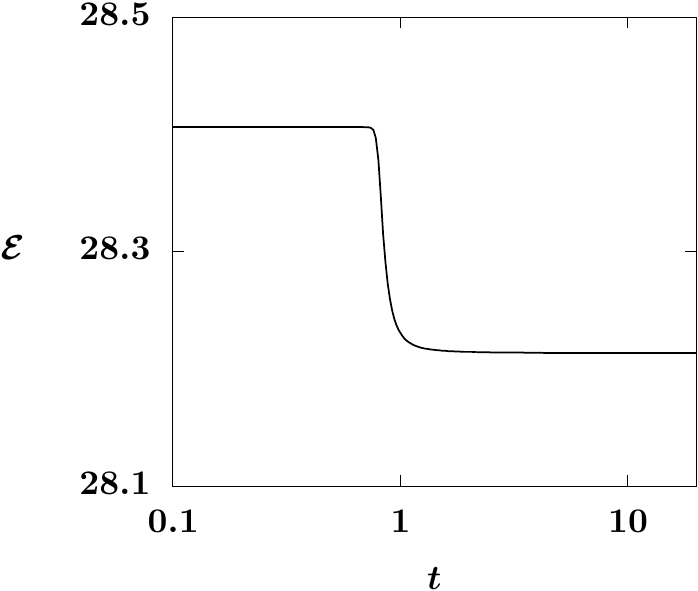}
    \caption{Dynamics of (\ref{r-1})---(\ref{r-3}) without gravity effects for $0 \leqslant t \leqslant 20$ starting from the initial data $u_0(x) = v(x)$ (dashed line) satisfying \eqref{rt-3} with $C_0 = 0.5$, $\lambda = -0.5$. (Left) The solution profile $u(x,t)$ and (right) the energy dissipation $\mathcal{E}(t)$ show that a two-hump solution is reached in long time. The inset plot shows the dynamics near the touch-down point where $u\approx r_0$.
    Other system parameters are $A = 0$, $r_0 = 0.2$, $\sigma = 0.01$.}
    \label{fig:dynamics_noGravity}
\end{figure}

\begin{figure}
    \centering
     \includegraphics[width = 0.48\textwidth]{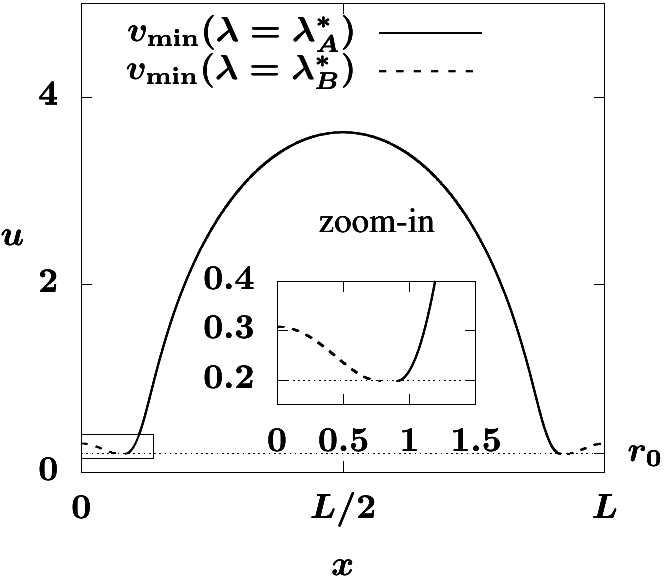}
    \includegraphics[width = 0.48\textwidth]{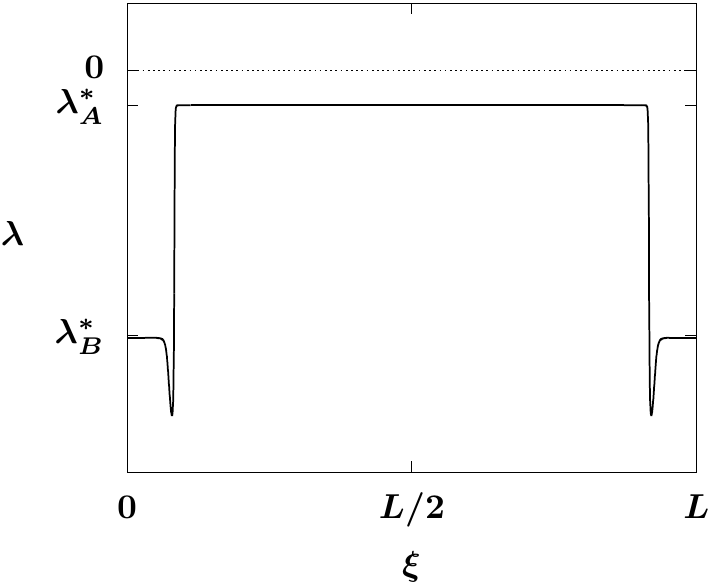}
    \caption{(Left) Energy minimizer $v_{\min}$ and (right) corresponding $\lambda(\xi)$ corresponding to the long-time dynamic solution $u(x)$ at time $t = 20$ in Fig.~\ref{fig:dynamics_noGravity} (left), showing that the PDE solution presented in Fig.~\ref{fig:dynamics_noGravity} approaches to a two-hump energy minimizer characterized by a piece-wise constant function $\lambda(\xi)$. The inset plot shows the two-hump energy minimizer near the touch-down point.}
    \label{fig:lambda_noGravity}
\end{figure}

Figure~\ref{fig:lambda_noGravity}  (right) presents the plot of $\lambda(\xi)$ corresponding to the dynamic solution in Fig.~\ref{fig:dynamics_noGravity} at time $t = 20$.
It shows that $\lambda(\xi)$ approaches a piece-wise constant function where the values of critical $\lambda^*$ are given by $\lambda^*_A = -0.523$ and $\lambda^*_B = -3.955$, respectively. Using these critical $\lambda^*$ values, we numerically solve for the energy minimizers $v_{\min}(\xi)$ derived in Lemma \ref{Lem-1} that correspond to $\lambda = \lambda^*_{A,B}$ and compare them (see Fig.~\ref{fig:lambda_noGravity}  (left)) against the long-time dynamic solution presented in Fig.~\ref{fig:dynamics_noGravity} (left). The two solutions, $v_{\min}(\lambda=\lambda^*_A)$ and $v_{\min}(\lambda=\lambda^*_B)$, are connected at the point where the common minimum value $v = r_0$ is reached (see the inset plot in Fig.~\ref{fig:lambda_noGravity}  (left)).
This comparison shows that the two-hump solution obtained from the direct numerical calculation is in good agreement with the energy minimizer characterized by $v_{\min}$ with a piece-wise constant function $\lambda(\xi) = \lambda^*_{A,B}$.

\begin{figure}
    \centering
     \includegraphics[width = 0.48\textwidth]{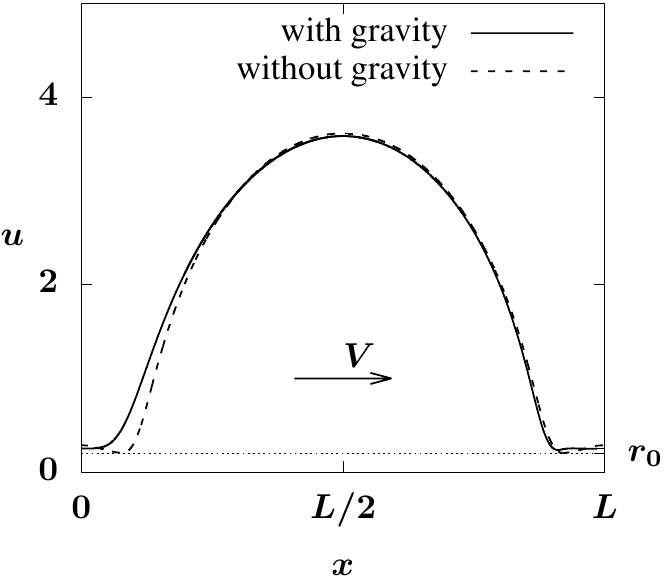}
    \includegraphics[width = 0.48\textwidth]{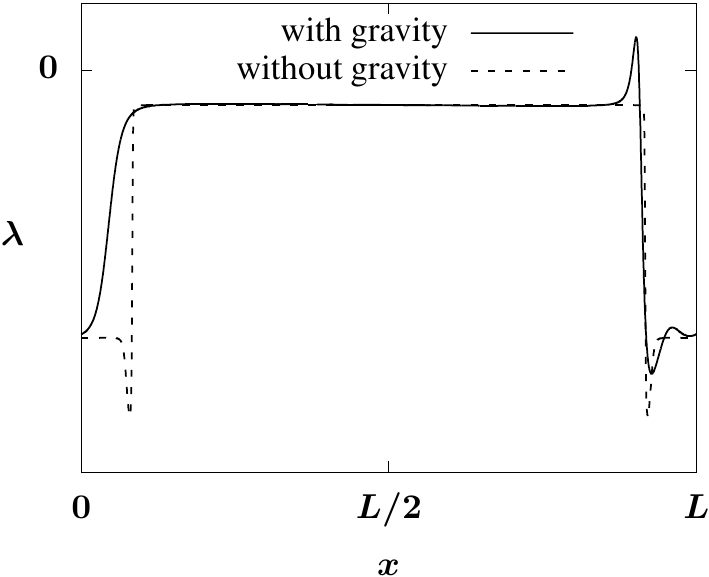}
    \caption{(Left) Solution profiles $u(x)$ and (right) the corresponding $\lambda(x)$ profiles of a travelling wave solution satisfying \eqref{r-000} with gravity effects at a speed $V = 5.12$ compared with long-time dynamic solution shown in Fig.~\ref{fig:dynamics_noGravity} without gravity effects. System parameters are identical to those used in Fig.~\ref{fig:dynamics_noGravity}.}
    \label{fig:profile_withGravity}
\end{figure}

The gravity effects incorporated in (\ref{r-1})---(\ref{r-3}) can lead to a travelling wave solution at a speed $V$ characterized in the travelling wave PDE \eqref{tt-1}. To better understand the influence of gravity effects in the model, we numerically solve the equation \eqref{r-1} starting from the two-hump profile obtained from the simulation shown in Fig.~\ref{fig:dynamics_noGravity} where gravity effects are excluded. The system parameters are also set to be identical to those used in Fig.~\ref{fig:dynamics_noGravity}. The simulation shows that the dynamic solution quickly converges to a travelling wave at a constant speed $V = 5.12$.
A comparison of this travelling wave solution and the two-hump profile without gravity is shown in  Fig.~\ref{fig:profile_withGravity} (left),  where the spatial symmetry around $x = L/2$ in the travelling wave solution is broken due to the presence of gravitational effects. Fig.~\ref{fig:profile_withGravity} (right) depicts the corresponding $\lambda$ profiles for the two solutions. This plot reveals that without gravity the $\lambda$ profile for the two-hump solution approaches a piece-wise constant function,
while the travelling wave solution only keeps the large hump corresponding to $\lambda=\lambda_A^*$ and the small hump with $\lambda=\lambda_B^*$ is saturated by the gravity. In addition, the global minimum of the travelling wave solution is above the fibre radius parameter $r_0 = 0.2$, which indicates that the gravitational effects prevent the singularity observed in the no-gravity case from happening.

Finally, we investigate the convergence to travelling wave solutions with the presence of gravity using the PDE \eqref{tt-1} in the moving reference frame with $\mu = 1$. A travelling wave solution $v_{\min}(\xi)$ at a propagating speed $V$ is a steady state of PDE \eqref{tt-1} and satisfies the fourth-order ODE \eqref{eq:tws_ODE} for $v(\xi)$ on a periodic domain $0 \leqslant \xi \leqslant L$,
\begin{equation}\label{eq:tws_ODE}
\sigma^{-1} [ Q(v) \bigl( (\Phi'(v_{\xi}))_{\xi} -  v^{-1} f(v_{\xi}) +   A v^{-m}  \bigr)_{\xi} ]_{\xi}
+ (Q(v) - \tfrac{V}{2} v^2)_{\xi} = 0.
\end{equation}
The profile of the travelling wave is determined by the period $L$ and the mass $M = \int_0^L v^2~d\xi$. For a given $(L, M)$ pair, we numerically solve the travelling wave ODE \eqref{eq:tws_ODE} for $v_{\min}(\xi)$ and the speed $V$ simultaneously on a periodic domain $0\leqslant \xi \leqslant L$  using finite differences. 
A continuous family of travelling wave solutions to \eqref{eq:tws_ODE} parametrized by $(L, M)$ can be identified.
Using a branch continuation method, we numerically track the travelling waves as the parameter $L$ or $M$ changes. Similar parametric and pseudo-arclength continuation methods are commonly used for bifurcation analysis \cite{keller1987lectures,ji2019numerical,ji2020steady}.
For $L = 9$ and $M = 57.2$, we obtain a travelling wave solution $v_{\min}(\xi)$ with the associated velocity $V = 4.24$ (see the solid curve in Figure~\ref{fig:dynamics_withGravity} (top left)). This velocity agrees with the analytical expression for $V$ provided in Lemma \ref{Lem-1-stw}. The other system parameters are identical to those used in Fig.~\ref{fig:dynamics_noGravity}.
\begin{figure}
    \centering
     \includegraphics[width = 0.45\textwidth]{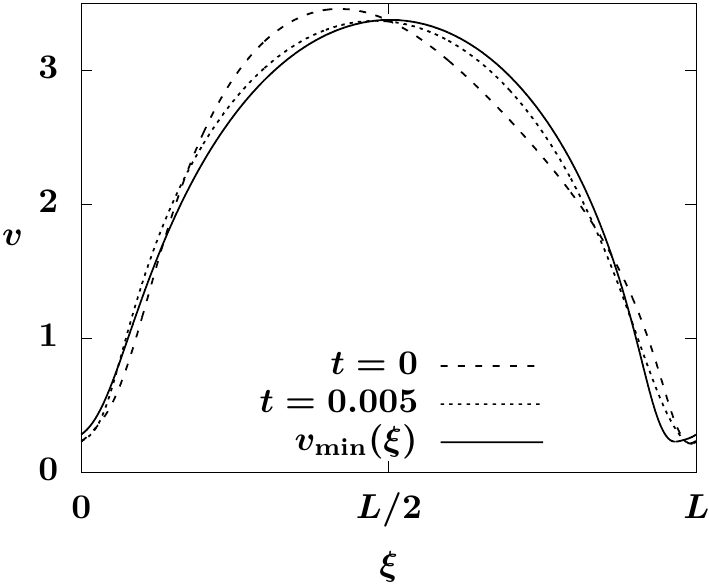}\hspace{0.2in}
         \includegraphics[width = 0.45\textwidth]{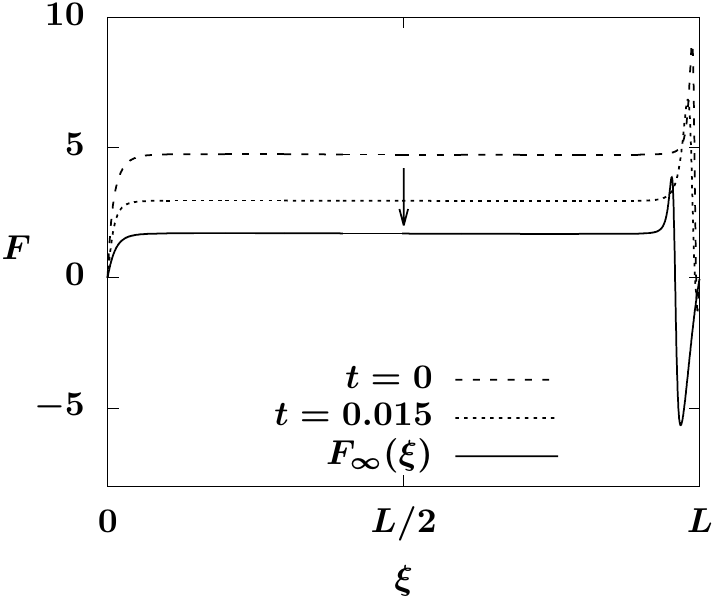}
    \includegraphics[width = 0.45\textwidth]{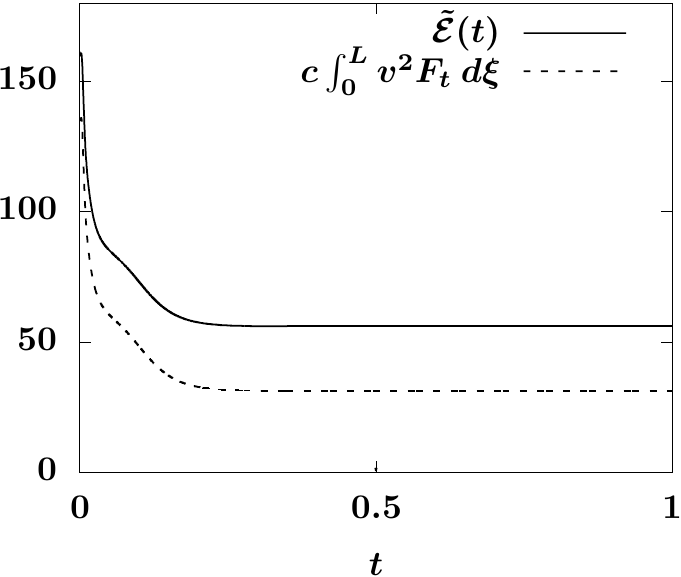}
    \caption{PDE simulation of \eqref{tt-1} with $\mu = 1$ starting from the initial data $v_0(\xi)$ in \eqref{eq:IC} with $\bar{f}=1$, showing (top left)  the convergence of the PDE solution $v(\xi,t)$ to the travelling wave $v_{\min}(\xi)$, (top right) the corresponding evolution of $F(\xi,t)$, and (bottom) the decay of modified energy  $\tilde{\mathcal{E}}$ and $\int_0^L v^2 F_t d\xi$ in time. The travelling wave $v_{\min}(\xi)$ satisfies the ODE \eqref{eq:tws_ODE} and is associated with $L = 9$ and $M = 57.2$. A scaling constant $c = 5\times 10^{-5}$ is used to display both curves $\tilde{\mathcal{E}}$ and $\int_0^L v^2 F_t d\xi$ in a comparable range. Other system parameters are identical to those used in Fig.~\ref{fig:dynamics_noGravity}.}
    % c = 5e-5
    \label{fig:dynamics_withGravity}
\end{figure}

Starting from a perturbed initial condition
\begin{equation}
    v_0(\xi) = v_{\min}(\xi)+\bar{\epsilon}\sin(2\pi \bar{f}\xi/L),
\label{eq:IC}
\end{equation}
where $\bar{\epsilon}$ is the magnitude and $\bar{f}/L$ is the frequency of the perturbation,
we numerically solve the travelling wave PDE \eqref{tt-1} and observe that the PDE solution converges to the travelling wave $v_{\min}(\xi)$ as $t\to \infty$. Here we select the magnitude of the perturbation
\begin{equation}
    \bar{\epsilon} = \frac{4}{L}\int_0^Lv_{\min}(\xi)\sin(2\pi \bar{f}\xi/L)~d\xi,
    \label{eq:ic_perturbed}
\end{equation}
such that the initial data satisfies the condition $\int_0^L v_0~d\xi = \int_0^L v^2_{\min}~d\xi = M$ which is necessary for the convergence to the travelling wave $v_{\min}$ due to the conservation of mass condition.
For the simulation shown in Fig.~\ref{fig:dynamics_withGravity}, we set $\bar{f} = 1$ which corresponds to the magnitude $\bar{\epsilon} = 0.51$ based on the formula \eqref{eq:ic_perturbed} for $L = 9$.
To calculate the modified energy $\tilde{\mathcal{E}}(t)$ defined in \eqref{eqn:modified_E}, we specify $\nu(t)$ in the definition of $F(\xi,t)$ in \eqref{eqn:F_def}  based on the formula
\begin{equation}
    \nu = \frac{\frac{V}{2}\int_0^L \frac{v^2(\xi,t)}{Q(v(\xi, t))}~d\xi -L}{\int_0^L\frac{1}{Q(v(\xi, t))}~d\xi}
\end{equation}
so that condition \eqref{ss-11-00} is satisfied. Figure~\ref{fig:dynamics_withGravity} (top right) depicts
the evolution of $F(\xi,t)$ in time as the PDE solution converges to the travelling wave, showing that the condition $F(0,t) = F(L,t) = 0$ in \eqref{ss-11-00} is satisfied.
The plot in Figure~\ref{fig:dynamics_withGravity} (bottom) shows that as the travelling wave $v_{\min}(\xi)$ is approached, both the modified energy $\tilde{\mathcal{E}}(t)$ and the corresponding $\int_0^L v^2 F_t~d\xi$ decay in time, which is consistent with the result shown in Lemma \ref{L-ex-gr}. We note that while the simulations in Fig.~\ref{fig:lambda_noGravity} and Fig.~\ref{fig:dynamics_withGravity} are conducted with $A = 0$, similar dynamics are observed for simulations with small stabilization parameter $A > 0$.

\section{Conclusions}
\label{sec:conclusion}
The main contribution of this paper is showing the existence and long-time behaviour of non-negative weak
solutions for the generalised nonlinear PDE (\ref{r-1})---(\ref{r-3}) using a priori estimates for energy-entropy functionals. Typical numerical studies of the energy functional minimizers and dynamic simulations of the PDE with and without gravitational effects are presented in support of the analytical results. The travelling wave solutions of the model are investigated both analytically and numerically. As $t\to \infty$, with proper system parameters and initial conditions, the solution to \eqref{r-1} converges to a travelling wave solution characterized by \eqref{tt-1}.

\section*{Acknowledgements}
H. Ji was supported by the Simons Foundation Math+X investigator award number 510776. The authors are also grateful to Prof. Andrea L. Bertozzi for helpful discussions.

\section*{Conflict of interest}
None.

\bibliographystyle{abbrv}
\bibliography{PDE_fiber.bib}
\vspace{0.2in}

\end{document}